\newcounter{constant}
\newcommand{\newconstant}[1]{\refstepcounter{constant}\label{#1}}
\newcommand{\useconstant}[1]{c_{\textnormal{\tiny \ref{#1}}}}
\newcounter{bigconstant}
\newtheorem{teo}{Theorem}[section]
\newtheorem{prop}[teo]{Proposition}
\newtheorem{lemma}[teo]{Lemma}
\numberwithin{equation}{section} 
\theoremstyle{definition}
\newtheorem{remark}[teo]{Remark}
\renewcommand{\P}{\mathbb{P}}
\newcommand{\E}{\mathbb{E}}
\newcommand{\R}{\mathbb{R}}
\newcommand{\N}{\mathbb{N}}
\newcommand{\Z}{\mathbb{Z}}
\DeclareMathOperator{\poisson}{Poisson}
\DeclareMathOperator{\ber}{Bernoulli}
\DeclareMathOperator{\expo}{Exponential}
\DeclareMathOperator{\var}{Var}
\DeclareMathOperator{\infl}{Inf}
\DeclareMathOperator{\cir}{Circ}
\DeclareMathOperator{\arm}{Arm}
\renewcommand{\d}{{\rm d}}
\newcommand{\pnorm}[2]{\left\|#2\right\|_{#1}}
\title[2d majority dynamics sharp threshold]{Sharp threshold for two-dimensional \\ majority dynamics percolation}
\author{Caio Alves}
\address{Institute of Mathematics, University of Leipzig -- Augustusplatz 10, 04109 Leipzig, Germany}
\email{caiotmalves@gmail.com}
\author{Rangel Baldasso}
\address{Bar-Ilan University, 5290002, Ramat Gan, Israel}
\email{r.baldasso@math.leidenuniv.nl}
\date{\today}
\begin{document}

	\begin{abstract}
	
	In this work we consider the two-dimensional percolation model arising from the majority dynamics process at a given time~$t\in\R_+$. We show the emergence of a sharp threshold phenomenon for the box crossing event at the critical probability parameter~$p_c(t)$ with polynomial size window. We then use this result in order to obtain stretched-exponential bounds on the one-arm event probability in the subcritical phase. Our results are based on differential inequalities derived from the OSSS inequality, inspired by the recent developments by Ahlberg, Broman, Griffiths, and Morris and by Duminil-Copin, Raoufi, and Tassion. We also provide analogous results for percolation in the voter model.
		
	\end{abstract}
	
	\maketitle

\section{Introduction}
~
\par In recent years, the study of sharp threshold phenomena in percolation has received great attention. This is mainly due to the development of new techniques that allow the treatment of dependent models~\cite{dcrt0,dcrt1,dcrt2}. Following this line, in this paper we prove that, for each fixed $t \geq 0$, percolation in two-dimensional majority dynamics undergoes a sharp phase transition in the  density parameter.

\par In two-dimensional majority dynamics, each vertex $x \in \Z^{2}$ receives an initial independent opinion which can be either zero or one\footnote{Following the usual notation in percolation theory, we refer to sites with opinion zero as closed and to sites with opinion one as open.}. With rate one, the vertex $x$ updates its opinion to match the majority of its neighbors. In the case of a tie, the original opinion is kept. Denote by $\P_{p,t}$ the distribution of the process at time $t$ when the initial density of ones is $p \in [0,1]$. Our interest lies in understanding the critical percolation function defined as
\begin{equation}\label{eq:perc_function}
p_{c}(t) = \inf \left\{ p \in [0,1]: \P_{p,t}\left[\begin{array}{c} \text{there exists an} \\ \text{infinite open path} \end{array} \right]>0\right\}.
\end{equation}

\par Not much is known about the behavior of the function above. In a work by Amir and the second author~\cite{ab}, it is proved that, for each $t>0$, $p_{c}(t) \in \left[\frac{1}{2}, p_{c}^{site}\right)$, where $p_{c}^{site}$ is the critical threshold for two-dimensional site percolation. Besides, the same work proves that $t \mapsto p_{c}(t)$ is a continuous non-increasing function and that there is no percolation at criticality for each $t \geq 0$.

\par Our main result here regards crossing events. For each $n \in \N$ and $\lambda >0$, let $R_{n}^{\lambda} = [1, \lambda n]\times [1,n]$, and consider the crossing event
\begin{equation}
H(\lambda n,n) = \left\{\begin{array}{c} \text{there exists an open path contained in } R_{n}^{\lambda} \\ \text{ that connects } \{1\} \times [1,\lambda n] \text{ to } \{n\} \times [1,\lambda n] \end{array} \right\}.
\end{equation}

\begin{teo}\label{t:sharp_thresholds}
For each $t \geq 0$, there exists $\gamma=\gamma(t)>0$ such that, for all $\lambda>0$,
\begin{equation}
\P_{p_{c}(t)-n^{-\gamma},t}[H(\lambda n,n)] \to 0 \quad \text{and} \quad \P_{p_{c}(t)+n^{-\gamma},t}[H(\lambda n,n)] \to 1,
\end{equation}
as $n$ grows.
\end{teo}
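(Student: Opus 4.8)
The plan is to follow the Duminil-Copin--Raoufi--Tassion scheme \cite{dcrt1,dcrt2}, adapted to the present dependent model in the spirit of \cite{abgm}: from the OSSS inequality \cite{osss}, applied to a well-chosen randomized decision tree, derive a differential inequality for the one-arm probability $\theta_n(p):=\P_{p,t}[0\leftrightarrow\partial\Lambda_n]$ (with $\Lambda_n:=[-n,n]^2\cap\Z^2$), and then deduce the sharp threshold from it, transferring to $H(n,n)$ at the end via Russo--Seymour--Welsh (RSW) arguments. The starting point is to realise the time-$t$ configuration as a deterministic function of an i.i.d.\ family indexed by $\Z^2$: to each $x$ attach an ingredient $I_x=(V_x,\mathcal P_x)$, where $V_x$ is uniform on $[0,1]$ (encoding the initial opinion through $\omega_x=\charf{V_x\le p}$) and $\mathcal P_x$ is an independent rate-one clock on $[0,t]$; then $\sigma_t$, hence every event considered, is a function of $(I_x)_x$ which, for fixed clocks, is increasing in the $\omega$-coordinates, since the majority rule is monotone. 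Two structural facts are needed. \emph{Locality}: running the dynamics backwards from $(x,t)$ along clock rings exhibits $\sigma_t(x)$ as a function of the ingredients in a random set $\mathcal R_t(x)\ni x$, and a domination by a branching process with $\poisson(t)$-many generations yields $\P[\mathcal R_t(x)\not\subseteq x+\Lambda_r]\le(C(t)/r)^{r}$, uniformly in $p$; this makes truncation to a finite box harmless and allows one to pass from ``revealing the opinion $\sigma_t(x)$'' to ``revealing the finitely many ingredients in $\mathcal R_t(x)$''. \emph{Positive association}: majority dynamics is attractive and the initial law is a product measure, so by Harris's theorem $\P_{p,t}$ is positively associated; together with the lattice symmetries this makes RSW theory (in Tassion's form) available for $\P_{p,t}$.

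For the differential inequality I would use the ``random sphere'' decision tree of \cite{dcrt1}: choose $k\in\{1,\dots,n\}$ uniformly, and reveal the cluster of $\partial\Lambda_k$, explored both inward and outward until it reaches $0$ and $\partial\Lambda_n$ or is exhausted; this determines $\{0\leftrightarrow\partial\Lambda_n\}$. In the i.i.d.\ representation a site $y$ is queried only if $y\leftrightarrow\partial\Lambda_k$, so — absorbing the locality correction into a $t$-dependent constant — its revealment is at most $\theta_m$, with $m$ the distance from $y$ to $\partial\Lambda_k$, and averaging over $k$ gives, for every ingredient $z$,
\[
\delta_z(T)\;\le\;\frac{C(t)}{n}\,\Sigma_n(p),\qquad \Sigma_n(p):=\sum_{k=0}^{n}\theta_k(p).
\]
OSSS then gives $\theta_n(1-\theta_n)=\var\bigl(\charf{0\leftrightarrow\partial\Lambda_n}\bigr)\le\bigl(\max_z\delta_z\bigr)\sum_z\infl_z$. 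Only the $V$-coordinates carry the $p$-dependence, so Russo's formula gives $\sum_x\infl_{V_x}\le\theta_n'(p)$; the clock coordinates are absent from Russo's formula, but their influences can be charged to opinion influences inside the regions $\mathcal R_t(\cdot)$ using monotonicity and the super-polynomial tails above, at the cost of a factor $(\log n)^{O(1)}$. The outcome is a differential inequality of the shape
\[
\theta_n'(p)\;\ge\;\frac{c(t)\,n}{(\log n)^{O(1)}\,\Sigma_n(p)}\;\theta_n(p)\bigl(1-\theta_n(p)\bigr),
\]
together with the analogous inequality for the $*$-connectivity one-arm probability.

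The rest follows the standard template. The DCRT dichotomy applied to this inequality shows that for $p<p_c(t)$ (which, since $\theta_n(p)\downarrow\P_{p,t}[0\leftrightarrow\infty]$, is precisely the critical point of \eqref{eq:perc_function}) the convergence $\theta_n(p)\to0$ is (stretched-)exponentially fast, so in particular $\Sigma_\infty(p)<\infty$; a union bound over translates then gives $\P_{p,t}[H(n,n)]\to0$ for $p<p_c(t)$. Planar duality combined with the colour-flip symmetry $0\leftrightarrow1$ of the dynamics gives $1-\P_{p,t}[H(n,n)]=\P_{1-p,t}[H^{*}(n,n)]$, where $H^{*}$ is the $*$-connectivity crossing of $R_n$; its critical point equals $1-p_c(t)$ (whence $p_c(t)\ge\tfrac12$, as $H\subseteq H^{*}$), and the supercritical estimates for $H(n,n)$ become exactly the subcritical estimates for the dual model, to which the whole scheme applies. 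Finally, for the window: at criticality there is no infinite cluster (RSW together with the Harris/Zhang argument), so RSW gives $\theta_k(p_c(t))\le k^{-\alpha_0}$ for some $\alpha_0=\alpha_0(t)>0$, whence $\Sigma_n(p)\le Cn^{1-\alpha_0}$ for all $p\le p_c(t)$; integrating the differential inequality over $[p_c(t)-n^{-\gamma},p_c(t)]$ then yields $\log\frac{1}{\theta_n(p_c(t)-n^{-\gamma})}\ge \frac{c(t)}{C(\log n)^{O(1)}}\,n^{\alpha_0-\gamma}\to\infty$ as soon as $\gamma<\alpha_0$, and stretched-exponential decay of $\theta_n$, hence of $\P_{p_c(t)-n^{-\gamma},t}[H(n,n)]$, follows; the supercritical half is the same estimate for the dual model, using $1-p_c(t)-n^{-\gamma}\le p_c(t)-n^{-\gamma}$. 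Taking $\gamma=\gamma(t)>0$ small enough (below $\min(\alpha_0,\alpha_0^{*})$) completes the proof.

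I expect the main obstacle to be the construction of the differential inequality in the \emph{dependent} setting: keeping the revealment of order $\Sigma_n(p)/n$ although the decision tree is forced to explore the random regions $\mathcal R_t(x)$, and — the more delicate point — bounding the total influence, including the influence of the clock ingredients which do not enter Russo's formula, in terms of $\theta_n'(p)$. Everything downstream is an adaptation of \cite{dcrt1,dcrt2} and classical planar percolation theory, once positive association and RSW have been established for $\P_{p,t}$.
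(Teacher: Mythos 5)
Your outline belongs to the same family of ideas as the paper --- OSSS after \cite{dcrt1,dcrt2}, locality via cone-of-light tails, RSW for the dependent measure $\P_{p,t}$, Russo's formula, and planar duality --- but the implementation differs in two respects. Instead of the two-stage discretization of Section~\ref{sec:construction} (a rate-$k$ Poisson clock process $\mathscr{P}^{k}$ thinned by independent $\ber(\frac{1}{k})$ selections, so that after conditioning on $\mathscr{P}^{k}$ the crossing indicator becomes a finitely-supported Boolean function whose quenched probabilities concentrate around the annealed ones by Lemma~\ref{lemma:variance_decay}), you treat each pair $(V_x,\mathcal P_x)$ as a single OSSS ingredient and argue annealed. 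And instead of working with the crossing events directly and bounding the length of $I_\alpha(n)$, you derive a differential inequality for the one-arm probability $\theta_n$ in the DCRT style and transfer to $H(n,n)$ at the end. Both are plausible bookkeeping choices, and the downstream plumbing (RSW at $p_c(t)$, integration of the inequality over a small window, colour-flip duality for the supercritical half) is the standard one.

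However there is a genuine gap, and it sits exactly where you flag the difficulty. You assert that the influence of the clock ingredient $\mathcal P_x$ can be ``charged to opinion influences inside $\mathcal R_t(\cdot)$ using monotonicity and the super-polynomial tails, at the cost of $(\log n)^{O(1)}$,'' but you give no mechanism, and monotonicity alone does not provide one: the time-$t$ configuration is not monotone in the clock data, so a clock flip can change the outcome in either direction and there is no soft comparison between clock-pivotality and opinion-pivotality. This is precisely the content of Proposition~\ref{prop:piv}, the one genuinely model-specific step in the paper (and the reason the argument fails for, say, the contact process). Its proof uses that closed nearest-neighbour circuits are \emph{stable} under majority dynamics: if a clock ring at $x$ is pivotal and, on the good event $E(\epsilon)^{c}$, the future cone of light of $x$ is contained in $B(x,3\epsilon\log n)$, then setting the initial opinions on $\partial B(x,3\epsilon\log n)$ to $0$ destroys every time-$t$ crossing, so flipping them one by one forces some $y\in\partial B(x,3\epsilon\log n)$ to be opinion-pivotal; the multiplicative cost of forcing the flips is $\bigl(p\wedge(1-p)\bigr)^{-O(\epsilon\log n)}=n^{O(\epsilon)}$, which is absorbed by the revealment gain. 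Without this argument (or an explicit substitute), your differential inequality has no proof, and the rest of the scheme --- which is correct but standard --- has nothing to integrate.
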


\begin{remark}\label{remark:lambda=1}
Even though we state the theorem above for general aspect ratio, we write the proof for the case $\lambda=1$ and denote $R_{n}^{1}$ simply by $R_{n}$. The proof remains the same for the case when $\lambda \neq 1$.
\end{remark}

\newconstant{c:decay}

\par As a consequence of Theorem~\ref{t:sharp_thresholds}, together with a general multiscale renormalisation argument, we obtain stretched-exponential decay of one-arm probabilities in the subcritical phase, together with analogous results for the supercritical case and dual closed $*$-paths\footnote{A $*$-path in $\Z^{2}$ is a path $x_{1}, x_{2}, \dots, x_{n}$ of vertices in $\Z^{2}$ such that $\pnorm{\infty}{x_{i+1}-x_{i}} = 1$, for all $i=1,2, \dots, n-1$. In other words, it is a path that is allowed to cross diagonals on the lattice $\Z^{2}$.}:

\begin{teo}\label{t:exp_decay}
For any $t \geq 0$, $\varepsilon>0$, and $p < p_{c}(t)$, there exists a positive constant $\useconstant{c:decay}=\useconstant{c:decay}(p, t, \varepsilon)>0$ such that
\begin{equation}
\label{eq:exp_decay1}
\P_{p,t}\left[\begin{array}{c} \text{there exists an open} \\ \text{ path connecting } 0 \text{ to the} \\ \text{  boundary of the ball } B(0,n) \end{array} \right] \leq \useconstant{c:decay}^{-1} \exp\left\{-\useconstant{c:decay}\frac{n}{(\log n)^\varepsilon}\right\}.
\end{equation}
Furthermore, if~$p > p_{c}(t)$, regarding long closed $*$-paths, we have: 
\begin{equation}
\label{eq:exp_decay2}
\P_{p,t}\left[\begin{array}{c} \text{there exists a closed} \\ \text{ $*$-path connecting } 0 \text{ to the} \\ \text{ boundary of the ball } B(0,n) \end{array} \right] \leq \useconstant{c:decay}^{-1} \exp\left\{-\useconstant{c:decay}\frac{n}{(\log n)^\varepsilon}\right\}.
\end{equation}
Also for~$p > p_{c}(t)$, we have
\begin{equation}
\label{eq:exp_decay3}
\P_{p,t}\left[\begin{array}{c} \text{The open cluster containing }0 \\ \text{is finite with diameter }n   \end{array} \right] \leq \useconstant{c:decay}^{-1} \exp\left\{-\useconstant{c:decay}\frac{n}{(\log n)^\varepsilon}\right\},
\end{equation}
and an analogous result holds for the closed $*$-connected cluster containing~$0$ when~$p < p_{c}(t)$.
\end{teo}

\begin{remark}
We strongly believe the poly-logarithmic correction present in the exponent of the inequalities above to be a shortcoming of the renormalisation argument used, and conjecture the correct bound to be simply exponential in~$n$.
\end{remark}

\begin{remark}
The above result follows from a general statement inspired by~\cite{pt} that we prove here about dependent percolation with fast decay of correlations. Under general conditions (see Proposition~\ref{p:1armdecaygen}), which are provided by Theorem~\ref{t:sharp_thresholds} and decoupling inequalities, this statement implies stretched-exponential decay of the one-arm event's probability whenever the probability of crossing a large annulus is sufficiently small.
\end{remark}

\begin{remark}
Regarding crossing functions and one-arm events for $p=p_{c}(t)$, in~\cite{ab} it is proved that the probability of $H(\lambda n,n)$ is bounded away from zero and one uniformly in $n$. This follows by combining Theorem 1.1 and Lemma 4.6 from that paper, together with a RSW theory for $*$-crossings analogous to the one that can be found in~\cite{att}. Furthermore, the decay of the one-arm event is polynomial in $n$.
\end{remark}

\bigskip

\noindent \textbf{Overview of the proofs.} The proof of Theorem~\ref{t:sharp_thresholds} relies on exploiting the relation between Boolean functions and randomized algorithms obtained through OSSS inequality. Here it is possible to write the existence of a crossing at time $t$ as a random Boolean function of the initial configuration, with randomness coming from the evolution of majority dynamics. A first approach would then be to consider the quenched configuration, where the clocks of the Markov process are fixed, and try to use these tools directly on the space of initial configurations, for each possible realization of the Poisson clocks in the evolution. This idea fails, since quenched configurations lack the homogeneity needed for our arguments.

\par To circumvent this difficulty, we need to consider the randomness that comes from the evolution together with the one from the initial configuration. We then revisit the idea developed in~\cite{abgm}, and further explored in~\cite{att} and~\cite{ahlbergb}, of using a two-stage construction of the process to obtain a discretization of it that still retains relevant properties of the annealed evolution. The central idea is to construct the process in a way that each vertex is associated to a Poisson point process of clocks of intensity $k \in \N$, with~$k$ large. Whenever a clock in a given vertex ticks, we keep this tick with probability $\frac{1}{k}$ and, in this case, update the opinion of the vertex to agree with the majority of its neighbors.

\par This artificial increase of the density of clock ticks allows us now to consider quenched probabilities, as we condition on the denser Poisson process, and still retain good properties of the annealed configuration with large probability. Given a collection of clock ticks, we obtain a Boolean function by considering the initial opinions and the selection of the clock ticks that are kept for the evolution.

\par We then proceed to analyze this quenched random Boolean function. First, we devise an algorithm that determines the outcome of the function and bound its revealment. This algorithm is a simple exploration process that discovers the open components that intersect a random line crossing the rectangle $R_{n}$ by querying the initial state of sites and which clock ticks are selected to compose the evolution. The bound on the revealment will follow from one-arm estimates in the quenched setting (see Proposition~\ref{prop:one_arm}). These estimates in turn are derived from Russo-Seymour-Welsh-type results stated in~\cite{ab} and inspired by~\cite{tassion}.

\par Since we are considering randomness that comes from the time evolution as well, when applying OSSS inequality it will be necessary to control the influence of clock ticks. We relate time-pivolality to space-pivotality, bounding the influence of a clock tick by a combination of the influences of the initial positions (see Proposition~\ref{prop:piv}). This pivotality relation is the most original and sensitive part of our proof, and fails, for example, if one considers the contact process instead of majority dynamics as the rule for the time evolution of the opinions. Nevertheless, we can also prove a similar result for the voter model (see Section~\ref{sec:further_models}). With this relation in hands, we are able to conclude the proof of Theorem~\ref{t:sharp_thresholds}.

\par Let us now turn our attention to the proof of Theorem~\ref{t:exp_decay}. Here, we provide a general statement on the decay rate of the one-arm probability in percolation models with fast decaying correlations. We prove that, provided the annulus crossing probability goes to~$0$ as the size of the annulus goes to infinity, the rate of decay of the one-arm probability is at least stretched exponential in the ball's radius. Combining this with Theorem~\ref{t:sharp_thresholds} yields Theorem~\ref{t:exp_decay}. The proof of this statement relies on a multiscale renormalisation argument adapted from~\cite{pt}. 

\begin{remark}
Our technique is somewhat general and might be applied to other dynamics. As an example, in Section~\ref{sec:further_models}, we explain how to adapt it to the case when the opinions follow the voter model. The greatest obstacle to a broader generalization is the lemma relating time- and space-pivotality, whose proof is strongly model-dependent.
\end{remark}

\begin{remark} Camia, Newmann, and Sidoravicius in~\cite{cns} prove that fixation of the opinions happens with stretched-exponential speed in a sub-interval of the supercritical phase. The idea of the proof is to observe that, if $p$ is larger than $p_{c}^{site}$ (the critical probability for Bernoulli site percolation in~$\Z^2$), one can obtain a random partition of $\Z^{2}$ into finite subsets whose boundaries are circuits of constant initial opinion which are preserved by the dynamics, reducing the evolution to finite random subsets. This, together with the uniform bound on the number of changes in opinion each vertex can have (see Tamuz and Tessler~\cite{tt}), allows one to conclude that the speed of convergence is stretched exponential. They further improve the proof by performing an enhancement on the initial configuration, and conclude that stretched exponential decay also holds for values of $p$ slightly smaller than $p_{c}^{site}$. We remark that the same idea can be applied together with Theorem~\ref{t:exp_decay} to verify that stretched-exponential decay of the non-fixation probability also holds for $p \in \left( \lim p_{c}(t), 1 \right]$. Symmetry considerations imply an analogous result for $p \in  \left[0, 1- \lim p_{c}(t) \right)$.
\end{remark}

\bigskip

\noindent \textbf{Related works.} Russo's approximate 0-1 law~\cite{russo} is one of the first results regarding sharp thresholds in independent percolation. It says that a sequence of monotone Boolean functions exhibits a sharp threshold, provided the supremum of the influences converges to zero. The use of randomized algorithms and OSSS inequality to understand threshold phenomena is much more recent and so far has proven to be a very powerful technique. Duminil-Copin, Raoufi and Tassion~\cite{dcrt1, dcrt2} use these techniques to study the subcritical phase of Voronoi percolation and threshold phenomena for the random-cluster and Potts models, while~\cite{dcrt0}, by the same authors, considers the case of Boolean percolation, under moment conditions of the radii distribution.

\par After these seminal works, other applications of such techniques were found. Muirhead and Vanneuville~\cite{mv} use this approach to conclude that level-set percolation for a wide class of smooth Gaussian processes undergoes a sharp phase transition. Dereudre and Houdebert~\cite{dh} conclude similar statements for the Widom-Rowlinson model.

\par The collection of upper invariant measures for the contact process was also studied. Van den Berg~\cite{vdb} considers the two-dimensional case, and proves the existence of a sharp phase transition without relying on the OSSS inequality, but on Talagrand inequality instead.

\par The discretization we use here is more in line with the one considered in Ahlberg, Broman, Griffiths, and Morris~\cite{abgm}, where the authors prove noise sensitivity for the critical Boolean model. With a similar discretization, and relying on Talagrand's inequality~\cite{talagrand}, Ahlberg, Tassion, and Teixeira~\cite{att}~deduce that Boolean percolation undergoes a sharp phase transition. Furthermore, Ahlberg, in collaboration with the second author~\cite{ahlbergb}, employs this technique to study noise sensitivity of two-dimensional Voronoi percolation and conclude, as a corollary, the existence of a sharp threshold with polynomial window.

\bigskip

\noindent \textbf{Open problems.}  Regarding the percolation function $p_{c}(t)$ (see Equation~\eqref{eq:perc_function}), it is known that it is a continuous non-increasing function that is strictly decreasing at zero. Whether or not it is strictly decreasing in the whole non-negative real line it is still not known. We hope our new estimates on the connectivity decay of the subcritical phase might help. Regarding its asymptotic behavior, we conjecture that $p_{c}(t)$ converges to $\frac{1}{2}$, as $t$ grows. From~\cite{tt}, one obtains that, almost surely, the process has a limiting configuration $\eta_{\infty}$. General results on two dimensional percolation imply that $\eta_{\infty}$ does not percolate for $p = \frac{1}{2}$ (see Gandolfi, Keane, and Russo~\cite{gkr}).

\par Our techniques are reliant on RSW theory, and are therefore limited to two dimensions. We believe our results to be valid for any dimension and for a large class of particle system models, and that with future developments in the field such general problems will be tractable. An interesting process where this should give some insight is zero-temperature Glauber dynamics for the Ising model. Here, the main difficulty is in relating time- and space-pivotality. We intend to pursue this in a future work.

\par Another problem this work leaves open is the correct decay of the one-arm probabilities in Theorem~\ref{t:exp_decay}, which we conjecture to be simply exponential in the distance~$n$.

\bigskip

\noindent \textbf{Organization of the paper.} In Section~\ref{sec:properties}, we state properties of majority dynamics and some results that will be used throughout the text. Section~\ref{sec:construction} contains a graphical construction of majority dynamics that will be used in our results, while Section~\ref{sec:influence} discusses the concept of influences and pivotality in the quenched setting. We present a randomized algorithm and bound its revealment in Section~\ref{sec:alg}, and use this algorithm to conclude the proof of Theorem~\ref{t:sharp_thresholds} in Section~\ref{sec:thresholds}. In Section~\ref{sec:one_arm}, we provide quenched one-arm estimates for the model that were previously assumed in the proof of Theorem~\ref{t:sharp_thresholds}. Theorem~\ref{t:exp_decay} is proved in Section~\ref{sec:decay}. Finally, we discuss how to modify our result to the case when the dynamics follows the voter model in Section~\ref{sec:further_models}.

\bigskip

\noindent \textbf{Acknowledgments.} The authors thank Daniel Ahlberg, Augusto Teixeira, and Daniel Valesin for valuable discussions and improvements during the elaboration of this work. CA is supported by the DFG grant SA 3465/1-1. RB is supported by the Israel Science Foundation through grant 575/16 and by the German Israeli Foundation through grant I-1363-304.6/2016.

\section{Basic properties}\label{sec:properties}
~
\par We denote by~$\eta\equiv\eta(p)=(\eta_t)_{t \in \R{+}}$ the two-dimensional majority dynamics process with initial configuration~$\eta_0\in\{0,1\}^{\Z^2}$, which assigns i.i.d.\ $\mathrm{Bernoulli}(p)$ random variables to each vertex of~$\Z^2$. As mentioned in the Introduction, we denote by~$\P_{p,t}$ the law of~$\eta_t=\eta_t(p)$. We collect here facts about this collection of measures. Complete proofs can be found in~\cite{ab} and references therein.

\par Notice that, as a consequence of Harris~\cite{harris} and a correlation decay estimate (see Equation~\ref{eq:correlation_decay_radius}) used to extend the result in~\cite{harris} to countable state space, the measures $\P_{p,t}$ are positively associated. This is the same as stating that $\P_{p,t}$ satisfies the FKG inequality: for any two events $A$ and $B$ that are increasing with respect to the partial ordering\footnote{We say $\eta \preceq \xi$ if $\eta(x) \leq \xi(x)$, for all $x \in \Z^{2}$. An event $A$ is increasing with respect to this partial ordering if $\eta \in A$ and $\eta \preceq \xi$ imply $\xi \in A$.} of $\{0,1\}^{\Z^{2}}$, it holds that
\begin{equation}
\P_{p,t}[A \cap B] \geq \P_{p,t}[A] \P_{p,t}[B].
\end{equation}

\par Given two disjoint subsets $A$ and $B$ of $\Z^{2}$ and $X \subset \Z^{2}$ such that $A \cup B \subset X$, we define the event
\begin{equation}\label{eq:open_connection}
\left\{A  \overset{X}{\longleftrightarrow} B\right\}
\end{equation}
as the existence of an open path contained in $X$ connecting a vertex in $A$ to a vertex in $B$. We omit $X$ in the notation above when $X=\Z^{2}$. The event where percolation holds is defined as the existence of an infinite open path. Standard arguments yield that
\begin{equation}
\P_{p,t}[\eta \text{ percolates}]>0 \quad \text{if, and only if,} \quad \inf_{n}\P_{p,t}\left[\{0\} \leftrightarrow \partial B(0,n)\right]>0,
\end{equation}
where $\partial B(0,n) = \{x \in \Z^{2}: \pnorm{\infty}{x}=n\} $ is the boundary of the ball $B(0,n)=[-n,n]^{2}$.

\newconstant{c:correlation}
\newconstant{c:rsw}

\par Let us now list some properties of the probabilities $\P_{p,t}$ for a fixed $t$. First of all, we state correlation decay for these measures, which is a consequence of standard cone-of-light estimates (see Propositions~\ref{prop:col} and~\ref{prop:decoup} below). For each $t \geq 0$, there exists a constant $\useconstant{c:correlation}=\useconstant{c:correlation}(t)$ such that, if $A$ is an event that depends on the configuration $\eta_{t}(x)$ only on sites inside $[-n,n]^{2}$ and $B$ is an event that depends on the configuration on sites outside $[-2n,2n]^{2}$, then, for every $p \in [0,1]$,
\begin{equation}\label{eq:correlation_decay_radius}
\Big|\P_{p,t}[A \cap B] - \P_{p,t}[A]\P_{p,t}[B] \Big| \leq \useconstant{c:correlation}n^{2}e^{-\frac{n}{8}\log n}.
\end{equation}

\par Given $\lambda>0$, denote by $H(\lambda n, n)$ the crossing event
\begin{equation}
H(\lambda n,n) = \left[ \{1\} \times [1,n] \overset{R_{n}}{\longleftrightarrow} \{\lfloor \lambda n \rfloor \} \times [1,n] \right],
\end{equation}
where $R_{n}\equiv R_n(\lambda)=[1,\lambda n] \times [1,n]$, and let $H^{*}(\lambda n, n)$ denote the event of the existence of a closed horizontal $*$-crossing of the rectangle $R_{n}$. The main result regarding crossing events is the RSW theory, that we can obtain by adapting the proofs of Tassion~\cite{tassion}, since they rely on the invariance of the percolation measure under certain symmetries of~$\Z^2$, decay of correlations, bounds for crossings of squares, and the FKG inequality, properties that are also available to us.

\begin{prop}[RSW theory]\label{prop:RSW}
For each fixed value of $t \geq 0$ and each $\lambda>0$, there exists a positive constant $\useconstant{c:rsw}=\useconstant{c:rsw}(\lambda, t)>0$ such that
\begin{equation}\label{eq:rsw}
\useconstant{c:rsw} \leq \P_{p_{c}(t),t}\left[H(\lambda n,n) \right] \leq 1-\useconstant{c:rsw},
\end{equation}
for all $n \in \N$.
\end{prop}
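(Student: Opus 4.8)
The plan is to verify that the measures $\P_{p_{c}(t),t}$ satisfy the hypotheses required by Tassion's proof of the box-crossing inequalities~\cite{tassion}, and then to run that construction while tracking the small errors produced by the mild dependence of our model. Tassion's argument rests on only three properties of the underlying measure: invariance under the isometries of $\Z^{2}$ (reflections across the lattice axes and diagonals, and rotations by $\pi/2$ about a lattice point); the FKG inequality; and an (approximate) decoupling between events supported on spatially separated regions. The first holds because majority dynamics commutes with every lattice isometry, so $\P_{p,t}$ inherits the symmetries of the i.i.d.\ initial law; the second is recorded above; and the third is supplied, in the form we need, by the correlation-decay estimate~\eqref{eq:correlation_decay_radius}. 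So the first step is merely to record these three facts.

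The second step is to transcribe Tassion's construction. I would introduce, for each scale $n$, his auxiliary crossing quantities --- in particular the function measuring how far along one side of a box an open arm issuing from the centre of an adjacent side can reach --- and prove the self-improvement (``mixing'') lemma: gluing crossings of overlapping boxes by FKG and invoking the square-root trick, one pushes a lower bound on the crossing probability of squares at a given scale to all larger scales and then, after finitely many further steps, to rectangles of any fixed aspect ratio $\lambda$. Combined with the fact, established in~\cite{ab}, that at $p_{c}(t)$ the square-crossing probabilities stay bounded away from $0$ and from $1$ (the latter being exactly the absence of percolation at criticality), this produces constants $c_{-}(\lambda,t),c_{+}(\lambda,t)>0$ with $c_{-}(\lambda,t)\le\P_{p_{c}(t),t}[H(\lambda n,n)]\le 1-c_{+}(\lambda,t)$ for every $n$; taking $\useconstant{c:rsw}=\min\{c_{-}(\lambda,t),c_{+}(\lambda,t)\}$ then gives~\eqref{eq:rsw}.

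The delicate point, and the one I expect to be the main obstacle, is that every appeal to independence in the i.i.d.\ version of Tassion's argument must be replaced by~\eqref{eq:correlation_decay_radius}, and the resulting errors kept under control. Two observations make this manageable. A gluing step combines only a bounded number of crossing events whose relevant regions can be arranged to sit at distance a fixed fraction of the current scale $m$ apart, so the decoupling error committed there is of order $m^{2}e^{-m/2}$; since the construction runs over $O(\log n)$ scales with $O(1)$ gluings at each, the accumulated error is summable and negligible beside the (at worst polynomially small) crossing probabilities being estimated. For the finitely many scales $m\le m_{0}$ below which~\eqref{eq:correlation_decay_radius} carries no information, I would bound the relevant crossing probabilities from below by a positive constant directly: such an event depends on finitely many sites, and by FKG its probability is at least $\prod_{x}\P_{p_{c}(t),t}[\eta_{t}(x)=1]>0$, the marginals lying strictly in $(0,1)$ since a site retains its initial opinion with probability $e^{-t}>0$ while $p_{c}(t)\in[\tfrac12,p_{c}^{site})$. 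The real work is then this bookkeeping --- making sure the decoupling errors introduced across all scales never swamp the probabilities one is bounding from below --- rather than any new conceptual ingredient beyond~\cite{tassion}.
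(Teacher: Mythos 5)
Your proposal follows the same route the paper intends: note that $\P_{p_c(t),t}$ is invariant under the lattice symmetries, satisfies FKG, and has the correlation-decay bound~\eqref{eq:correlation_decay_radius}, and then transfer Tassion's construction, with the RSW-type input at criticality drawn from~\cite{ab}. The paper does not spell out a proof at all --- it simply asserts that Tassion's argument adapts under these three properties and defers to~\cite{ab} --- so your extra work (tracking the decoupling errors across the $O(\log n)$ scales, handling the finitely many small scales by FKG) is exactly the missing bookkeeping, and your treatment of it is sound. One small imprecision worth flagging: the upper bound $\P_{p_c(t),t}[H(\lambda n,n)]\le 1-\useconstant{c:rsw}$ is not ``exactly'' absence of percolation at criticality; it is equivalent, via duality, to a uniform lower bound on closed $*$-crossing probabilities at criticality, which one gets by running the same Tassion argument in the dual $*$-lattice (for which~\eqref{eq:rsw3} is the relevant statement). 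Absence of percolation at criticality is a consequence of this upper bound, not a restatement of it, though both are established in~\cite{ab}.
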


\par Since $H(\lambda n, n)$ holds if, and only if, there is no closed vertical $*$-crossing of $R_{n}=[1,\lambda n] \times [1,n]$, one can easily deduce from the proposition above that an analogous result holds for the event $H^{*}(\lambda n, n)$. Furthermore, monotonicity considerations imply that, for all $p \geq p_{c}(t)$,
\begin{equation}
\label{eq:rsw2}
\inf_{n} \P_{p,t}\left[H(\lambda n,n) \right] \geq \useconstant{c:rsw}(\lambda, t),
\end{equation}
and, for all $p \leq p_{c}(t)$,
\begin{equation}
\label{eq:rsw3}
\inf_{n} \P_{p,t}\left[H^{*}(\lambda n,n) \right] \geq \useconstant{c:rsw}(\lambda^{-1}, t).
\end{equation}

Since this is a straighforward adaptation of the proof in~\cite{tassion}, we choose to omit it here.

\bigskip

\noindent \textbf{The OSSS inequality.} Let us quickly recall the version of the OSSS inequality we use here. Fix $f:\{0,1\}^{n} \to \{0,1\}$ a Boolean function and, for a vector $\bold{p} = (p_{1}, \dots p_{n})$, let $\P_{\bold{p}}$ denote the probability measure on $\{0,1\}^{n}$ where each entry is independent and the $i$-th entry has probability $p_{i}$ of being one. For each $i \in [n]$, we define the influence of the bit $i$ as
\begin{equation}
\infl_{\bold{p}}(f,i) = \P_{\bold{p}}[f(\omega) \neq f(\omega^{i})],
\end{equation}
where $\omega^{i}$ is obtained from $\omega$ by changing the $i$-th entry of the vector.

\par A (randomized) algorithm $\mathcal{A}$ is a rule that outputs a value zero or one, by querying entries of the vector $\omega$, and whose choice of the next entry to be queried is allowed to depend on the previous observations. An algorithm can determine its output before querying all bits, in this case we say the algorithm~\emph{stops}. We say that the algorithm determines $f$ if its outcome coincides with $f(\omega)$, for every $\omega$. The revealment of the bit $i$ for an algorithm $\mathcal{A}$ is the quantity
\begin{equation}
\delta(\mathcal{A},i) = \P_{\bold{p}}[\mathcal{A} \text{ queries } i \text{ before stopping}].
\end{equation}

\par The OSSS inequality (see~\cite{osss}, Theorem 3.1) provides the bound
\begin{equation}
\label{eq:osss}
\var(f) \leq \sum_{i=1}^{n}\delta(\mathcal{A},i)\infl_{\bold{p}}(f,i).
\end{equation}
Poincar\'{e}'s inequality can be recovered from the above inequality by bounding all the revealments by one.

\section{The two-stage construction}\label{sec:construction}
~
\par In this section we present a graphical construction of majority dynamics that will be used in the rest of the paper. We begin by presenting the usual Harris construction, since we will use a simple modification of it.

\par Consider a collection $\mathscr{P}=\big( \mathscr{P}_{x} \big)_{x \in \Z^2}$ of i.i.d. Poisson processes in the interval~$[0,t]$ with rate one. For each $x \in \Z^2$, the clocks $\mathscr{P}_{x}$ will control the updates in that site: whenever the clock at $x$ ticks, the opinion at $x$ is updated to match the majority of its neighbors. In case of a draw, the site keeps its original opinion. With this construction, we can fully determine the state of the system at any given time with the collection of clocks $\mathscr{P}$ and the initial configuration $\eta_{0}$. This is a classical fact that follows from the observation that the only vertices whose initial opinions are necessary in order to determine $\eta_{s}(x)$, for each $x \in \Z^{2}$ and $s \leq t$, are the ones which are connected to $x$ via a path of vertices with clocks that ring in increasing order. This set of points is easily seen to be almost surely finite. This same fact is at the heart of the cone-of-light estimates presented below (see Proposition~\ref{prop:col}).

\begin{remark}\label{remark:voter_model}
It is possible to obtain the voter model with the same graphical construction, just by modifying the way sites are updated: instead of choosing the new opinion to be the majority of the neighboring opinions, the update is made by copying the opinion of a randomly selected neighbor.
\end{remark}

\par The construction we will use is a slight modification of the one presented above. Instead of considering the collection of clocks $\mathscr{P}$, we start with a denser collection of clocks $\mathscr{P}^{k}=\big( \mathscr{P}_{x}^{k} \big)_{x \in \Z^2}$ distributed as i.i.d. Poisson processes on the interval~$[0,t]$ with rate $k$, where $k$ is a fixed positive integer number that will be taken to be large. With this collection of clocks in hand, we need some additional randomness in order to define the process: whenever a clock ticks, we perform the update at the respective site with probability $\frac{1}{k}$ (this can be realized by considering an independent $\ber\left(\frac{1}{k}\right)$ random variable for each clock tick of $\mathscr{P}^{k}$). In this case, conditioned on the realization of the clocks $\mathscr{P}^{k}$, we can obtain the state of the system at any given time $t \geq 0$ by using the initial configuration $\eta_{0}$ and the collection of random variables that verify whether or not each update is performed. We will denote by $\P_{kt}$ the distribution of $\mathscr{P}^{k}$ and by $\P_{p, \frac{1}{k}}$ the joint distribution of the initial condition and the additional randomness necessary in order to determine the process~$(\eta_{s})_{s \geq 0}$.

\par The advantage of the last construction presented above lies in the fact that the model at time $t$ may be seen as a random Boolean function: for each realization of $\mathscr{P}^{k}$, we obtain a Boolean function whose entries select the initial configuration and which updates are performed. By choosing the value of $k$ large enough, we can ensure that these random functions are well-behaved, in a sense that we will make clear later.

\par We will work with the process conditioned on the realization $\mathscr{P}^{k}$. In this case, we may write the characteristic function of the crossing event $\left[ \eta_{t} \in H(\lambda n,n)\right]$ as a Boolean function $f_{n}: \{0,1\}^{\Lambda} \to \{0,1\}$, where 
\[
\Lambda = \Z^2 \cup \{(x,s): x \in \Z^2, \, s \in \mathscr{P}^{k}_{x} \cap [0,t]\},
\]
and such that each configuration describes the entries at time zero and whether each clock tick before time $t$ is accepted or not. We will denote a configuration on $\{0,1\}^{\Lambda}$ by a pair $(\eta_{0}, \mathscr{P})$, where the first coordinate contains the initial opinions of each site and the second retains the information of which clock ticks are kept. Moreover, each entry of $\eta_{0}$ will be distributed as a $\ber(p)$ random variable, where $p \in [0,1]$ is the initial density of the process, and each entry of $\mathscr{P}$ will have distribution $\ber\left(\frac{1}{k}\right)$.

\par Since, almost surely (on $\mathscr{P}^{k}$), one needs to observe only a finite amount of sites in order to verify if $H(\lambda n,n)$ holds or not, the domain of $f_{n}$ is almost surely finite and hence this is a well-defined Boolean function.

\par The main reason we consider this construction is the following lemma.
\begin{lemma}\label{lemma:variance_decay}
For every integer $k \geq 2$, $p \in (0,1)$ and Boolean function $f$ of the graphical construction, we have
\begin{equation*}
\var\Big(\E\left[f(\eta_{0}, \mathscr{P})\middle|\mathscr{P}^{k}\right]\Big) \leq \frac{1}{k}.
\end{equation*}
\end{lemma}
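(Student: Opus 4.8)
The plan is to observe that the artificially dense clock process of Section~\ref{sec:construction} is, in law, the superposition of $k$ independent rate-one Poisson processes, and that within this representation the two‑stage construction merely ``selects'' one of these sub‑processes. The bound will then follow from the law of total variance applied to the empirical average of $k$ i.i.d.\ crossing indicators.

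Concretely, I would first build a convenient coupling. Let $\eta_{0}=\eta_{0}^{(1)},\eta_{0}^{(2)},\dots,\eta_{0}^{(k)}$ be i.i.d.\ initial configurations and $\mathscr{P}^{(1)},\dots,\mathscr{P}^{(k)}$ i.i.d.\ rate‑one Poisson processes on $\Z^{2}\times[0,t]$, all mutually independent, and set $\mathscr{P}^{k}:=\bigsqcup_{j=1}^{k}\mathscr{P}^{(j)}$. Since a superposition of independent Poisson processes is again Poisson with the sum of the intensities, $\mathscr{P}^{k}$ has the law of the rate‑$k$ clock process; and, conditionally on $\mathscr{P}^{k}$, each of its ringings lies in $\mathscr{P}^{(1)}$ independently with probability $1/k$. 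Declaring the ringings of $\mathscr{P}^{(1)}$ to be the accepted ones therefore reproduces the joint law of the dense process and the accepted ringings used to define $f_{n}$, and in this coupling $f_{n}(\eta_{0},\mathscr{P})=g(\eta_{0},\mathscr{P}^{(1)})$, where $g$ is the (a.s.\ finitely supported) indicator that majority dynamics started from its first argument and updated along the ringings in its second argument crosses $R_{n}$ at time $t$. Writing $X_{j}:=g(\eta_{0}^{(j)},\mathscr{P}^{(j)})$, so that $X_{1}=f_{n}(\eta_{0},\mathscr{P})$, the pairs $(\eta_{0}^{(j)},\mathscr{P}^{(j)})$ are i.i.d., hence the $X_{j}$ are i.i.d.\ Bernoulli variables with common mean $\P_{p,t}[H(\lambda n,n)]$.

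Next I would use exchangeability. The family $(\mathscr{P}^{(j)})_{j=1}^{k}$ is exchangeable and jointly determines $\mathscr{P}^{k}$, and the $\eta_{0}^{(j)}$ are i.i.d.\ and independent of it; hence the conditional law of $(\eta_{0}^{(j)},\mathscr{P}^{(j)})$ given $\mathscr{P}^{k}$ does not depend on $j$, so $\E[X_{j}\mid\mathscr{P}^{k}]$ is the same function of $\mathscr{P}^{k}$ for every $j$, namely $\E[f_{n}(\eta_{0},\mathscr{P})\mid\mathscr{P}^{k}]$. Averaging over $j$ gives
\begin{equation*}
\E\big[f_{n}(\eta_{0},\mathscr{P})\,\big|\,\mathscr{P}^{k}\big]=\E\Big[\tfrac1k{\textstyle\sum_{j=1}^{k}}X_{j}\,\Big|\,\mathscr{P}^{k}\Big].
\end{equation*}
Finally, by the law of total variance the left‑hand side has variance at most $\var\big(\tfrac1k\sum_{j=1}^{k}X_{j}\big)=\tfrac1k\var(X_{1})\leq\tfrac1{4k}\leq\tfrac1k$, which is the assertion. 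The only point requiring any care is the first step: one must verify that the superposition‑and‑selection coupling genuinely matches the construction of Section~\ref{sec:construction} — in particular the joint law of $\mathscr{P}^{k}$ together with the accepted ringings — and that the conditional expectations $\E[X_{j}\mid\mathscr{P}^{k}]$ are indeed independent of $j$; once these are in place, the remaining steps are routine.
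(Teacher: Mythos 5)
Your proof is correct and takes essentially the same route as the paper: both represent the rate-$k$ clock process as a superposition of $k$ i.i.d.\ rate-one processes, identify the accepted ringings with one of them, and then use Jensen (law of total variance) to reduce $\var(\E[f_n\mid\mathscr{P}^k])$ to the variance of an average of $k$ i.i.d.\ bounded variables. The only cosmetic differences are that the paper keeps a single $\eta_0$ and averages it out inside each summand (rather than drawing $k$ independent copies), and selects the accepted process via a uniform random index $\kappa$ instead of your exchangeability symmetrisation with $\kappa=1$ fixed; both yield the same $\tfrac1k\var(\cdot)\le\tfrac1{4k}\le\tfrac1k$ bound.
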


\begin{proof}
The proof follows simply by considering a particular construction of the pair $(\mathscr{P},\mathscr{P}^{k})$. First, let $\mathscr{P}_{1},\mathscr{P}_{2},\ldots,\mathscr{P}_{k}$ be independent copies of $\mathscr{P}^{1}$ (and independent of $\eta_{0}$), and consider $\kappa$ be chosen uniformly in $[k]\equiv\{1,\dots,k\}$. Observe that $(\mathscr{P},\mathscr{P}^{k}) \sim (\mathscr{P}_{\kappa}, \cup_{i \in [k]} \mathscr{P}_{i})$. From this, one readily obtains the equality
\begin{equation*}
\var\left(\E\left[f(\eta_{0}, \mathscr{P})\middle|\mathscr{P}^{k}\right]\right) = \var\Big(\E\Big[f(\eta_{0}, \mathscr{P}_{\kappa})\Big|\cup_{i \in [k]}\mathscr{P}_{i}\Big]\Big)
\end{equation*}

Directly from Jensen's inequality we obtain\footnote{If $\mathcal{F} \subset \mathcal{G}$ are two $\sigma$-algebras, Jensen's inequality implies
\begin{equation*}
\E \Big[X\Big|\mathcal{F}\Big]^{2} = \E \Big[\E[X|\mathcal{G}]\Big|\mathcal{F}\Big]^{2} \leq \E\Big[\E[X|\mathcal{G}]^{2}\Big|\mathcal{F}\Big],
\end{equation*}
from where one deduces that
\begin{equation*}
\var\left(E[X|\mathcal{F}]\right) = \E\left[E[X|\mathcal{F}]^{2}\right] - E[X]^{2} \leq \E\left[E[X|\mathcal{G}]^{2}\right] - E[X]^{2} = \var\left(E[X|\mathcal{G}]\right).
\end{equation*}
Estimate~\eqref{eq:variance_inequality} follows directly.}
\begin{equation}\label{eq:variance_inequality}
\var\Big(\E\Big[f(\eta_{0}, \mathscr{P}_{\kappa})\Big|\cup_{j \in [k]}\mathscr{P}_{j}\Big]\Big) \leq \var \Big(\E\left[f(\eta_{0}, \mathscr{P}_{\kappa})\middle|(\mathscr{P}_{i})_{i=1}^{k}\right]\Big)
\end{equation}

The conditional expectation on the variance above can be easily calculated
\begin{equation*}
\E\left[f(\eta_{0}, \mathscr{P}_{\kappa})\middle|(\mathscr{P}_{i})_{i=1}^{k}\right] = \frac{1}{k}\sum_{i=1}^{k}\E_{\eta_0}[f(\eta_{0}, \mathscr{P}_{i})],
\end{equation*}
where $\E_{\eta_{0}}$ denotes the expectation with respect to the initial condition $\eta_{0}$.

Finally, since the processes $(\mathscr{P}_{i})_{i=1}^{k}$ are independent, we obtain
\begin{equation*}
\var\bigg(\frac{1}{k}\sum_{i=1}^{k}\E_{\eta_0}[f_{n}(\eta_{0}, \mathscr{P}_{i})]\bigg) \leq \frac{1}{k},
\end{equation*}
concluding the proof.
\end{proof}

\newconstant{c:cir}

\par We can use the above lemma together with RSW theory to bound quenched probabilities in good events. Let
\begin{equation}\label{eq:cir}
\cir(n) = \left\{ \begin{array}{c} \text{there exists an open circuit} \\ \text{contained in } B\left(0, 3n \right) \setminus B\left(0, n \right) \end{array}\right\},
\end{equation}
and write $\cir^{*}(n)$ for the equivalent event, but asking for the existence of a closed $*$-circuit. Notice that Equations~(\ref{eq:rsw2}) and~(\ref{eq:rsw3}) and the FKG inequality imply that there exists a positive constant $\useconstant{c:cir}=\useconstant{c:cir}(t)>0$ such that
\begin{equation}\label{eq:RSW_cir}
\inf_{n}\P_{p,t}\left[\cir(n)\right] \geq \useconstant{c:cir},
\end{equation}
if $p \geq p_{c}(t)$, and
\begin{equation}\label{eq:RSW_cir2}
\inf_{n}\P_{p,t}\left[\cir^{*}(n)\right] \geq \useconstant{c:cir},
\end{equation}
for $p \leq p_{c}(t)$.
\begin{lemma}\label{lemma:quenched_cir}
For any fixed $t \geq 0$ and $k \geq 2$,
\begin{equation*}
\P_{kt} \Big[\P_{p , \frac{1}{k}}\left[ \cir(n)\middle| \mathscr{P}^{k}\right]  \leq \frac{\useconstant{c:cir}}{2} \Big] \leq \frac{4}{\useconstant{c:cir}^{2}k},
\end{equation*}
for all $n \geq 1$ and $p \geq p_{c}(t)$. An analogous estimate holds for $\cir^{*}(n)$ if $p \leq p_{c}(t)$. 
\end{lemma}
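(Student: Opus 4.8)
The plan is to deduce the bound from Chebyshev's inequality, using the second-moment estimate of Lemma~\ref{lemma:variance_decay} together with the uniform RSW lower bound~\eqref{eq:RSW_cir}. First I would note that, exactly as the crossing event $H(\lambda n,n)$ was encoded, for each realization of $\mathscr{P}^{k}$ the indicator of $\cir(n)$ (evaluated on $\eta_{t}$) can be written as a Boolean function $g_{n}\colon\{0,1\}^{\Lambda}\to\{0,1\}$ of the pair $(\eta_{0},\mathscr{P})$: indeed $\cir(n)$ depends on $\eta_{t}$ only inside the annulus $B(0,3n)\setminus B(0,n)$, hence, through the cone-of-light bound behind~\eqref{eq:correlation_decay_radius}, on only finitely many coordinates of $(\eta_{0},\mathscr{P})$ almost surely. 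The proof of Lemma~\ref{lemma:variance_decay} never used the particular form of $f_{n}$ — it works for any Boolean function of $(\eta_{0},\mathscr{P})$ — so it applies verbatim to $g_{n}$ and gives
\begin{equation*}
\var\Big(\E\big[g_{n}(\eta_{0},\mathscr{P})\mid\mathscr{P}^{k}\big]\Big)\ \leq\ \frac{1}{k}.
\end{equation*}

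Next I would control the mean of this conditional probability via RSW theory. Set $X_{n}=\P_{p,\frac{1}{k}}[\cir(n)\mid\mathscr{P}^{k}]=\E[g_{n}(\eta_{0},\mathscr{P})\mid\mathscr{P}^{k}]$. By the tower property, $\E_{kt}[X_{n}]=\P_{p,t}[\cir(n)]$, which by~\eqref{eq:RSW_cir} is at least $\useconstant{c:cir}$ for every $n\geq 1$, provided $p\geq p_{c}(t)$. Consequently, on the event $\{X_{n}\leq \useconstant{c:cir}/2\}$ one has $|X_{n}-\E_{kt}[X_{n}]|\geq \useconstant{c:cir}/2$, and Chebyshev's inequality yields
\begin{equation*}
\P_{kt}\Big[X_{n}\leq\tfrac{\useconstant{c:cir}}{2}\Big]\ \leq\ \P_{kt}\Big[\big|X_{n}-\E_{kt}[X_{n}]\big|\geq\tfrac{\useconstant{c:cir}}{2}\Big]\ \leq\ \frac{4\,\var(X_{n})}{\useconstant{c:cir}^{2}}\ \leq\ \frac{4}{\useconstant{c:cir}^{2}k},
\end{equation*}
which is exactly the claimed inequality. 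For the analogous statement about $\cir^{*}(n)$ when $p\leq p_{c}(t)$, I would run the same argument word for word, replacing~\eqref{eq:RSW_cir} by the lower bound~\eqref{eq:RSW_cir2} for $\P_{p,t}[\cir^{*}(n)]$; note that monotonicity of the events is irrelevant here, only that the associated Boolean function depends on finitely many coordinates almost surely.

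There is no serious obstacle in this argument: all the real content sits in Lemma~\ref{lemma:variance_decay} (and, behind it, in the two-stage construction), together with the quenched-independent RSW lower bound. The one point deserving a word of care is the observation that Lemma~\ref{lemma:variance_decay} is stated for the crossing function $f_{n}$ but its proof — coupling $\mathscr{P}^{k}$ with $k$ independent rate-one clock families and choosing a uniform index — is insensitive to which Boolean function of $(\eta_{0},\mathscr{P})$ is plugged in, so it transfers to $g_{n}$ at no cost.
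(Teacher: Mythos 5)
Your proof is correct and follows essentially the same route as the paper's: apply the variance bound of Lemma~\ref{lemma:variance_decay} to the indicator of $\cir(n)$, note $\E_{kt}[X_n]=\P_{p,t}[\cir(n)]\geq \useconstant{c:cir}$ by~\eqref{eq:RSW_cir}, and finish with Chebyshev; the $\cir^*$ case is handled identically via~\eqref{eq:RSW_cir2}. Your added remark that Lemma~\ref{lemma:variance_decay}'s proof is function-agnostic is exactly the (implicit) observation the paper relies on.
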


\begin{proof}
The same proof of Lemma~\ref{lemma:variance_decay} can be used to the characteristic function of $\cir(n)$. Combining this with Chebyshev inequality and~\eqref{eq:RSW_cir} implies
\begin{equation*}
\begin{split}
\P_{kt} & \Big[\P_{p, \frac{1}{k}}\left[\cir(n)\middle| \mathscr{P}^{k}\right]  \leq \frac{\useconstant{c:cir}}{2} \Big]\\
& \qquad\leq 
\P_{kt} \left[\Big|\P_{p, \frac{1}{k}}\left[\cir(n)\middle| \mathscr{P}^{k}\right]-\P_{p, t}[\cir(n)]\Big| \geq \frac{\useconstant{c:cir}}{2}\right] \\
& \qquad \leq \frac{4}{\useconstant{c:cir}^{2} k},
\end{split}
\end{equation*}
for all $k \geq 2$. To conclude the statement for $\cir^{*}(n)$, one proceeds in the same way, but with~\eqref{eq:RSW_cir2} instead of~\eqref{eq:RSW_cir}.
\end{proof}

\par The result above provides quenched estimates for the existence of circuits and can be applied to deduce quenched one-arm estimates. For each $n$, let $\arm_{\sqrt n}(\eta_{0}, \mathscr{P})$ denote the event that there exists an open path connecting the boundary of the ball $B\left(0, n^{\sfrac{1}{4}}\right)$ to the boundary of the ball $B\left(0, n^{\sfrac{1}{2}}\right)$. This path can be chosen to be entirely contained inside $B\left(0, n^{\sfrac{1}{2}}\right) \setminus B\left(0, n^{\sfrac{1}{4}}\right)$. Denote also by $\arm^{*}_{\sqrt{n}}(\eta_{0}, \mathscr{P})$ the corresponding event, but asking for a closed $*$-path with the same properties.
\begin{prop}[One-arm estimate]\label{prop:one_arm}
There exists $\nu>0$ such that, for all $\gamma>0$, there exists $k_{0} \geq 2$ such that, for any $k \geq k_{0}$ and $p \leq p_{c}(t)$, if $n \geq n_{0}=n_{0}(k)$, then
\begin{equation}
\P_{kt}\Big[\P_{p, \frac{1}{k}}\left[\arm_{\sqrt n}(\eta_{0}, \mathscr{P})\middle|\mathscr{P}^{k}\right] \geq n^{-\nu} \Big] \leq n^{-\gamma}.
\end{equation}
An analogous result holds for $\arm^{*}_{\sqrt{n}}(\eta_{0}, \mathscr{P})$ instead of $\arm_{\sqrt n}(\eta_{0}, \mathscr{P})$ if we assume $p \geq p_{c}(t)$.
\end{prop}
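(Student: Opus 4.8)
The plan is to show that, in the quenched measure, the arm event is blocked by closed $*$-circuits at $\Theta(\log n)$ well-separated scales, and to amplify the $O(1/k)$ estimate of Lemma~\ref{lemma:quenched_cir} into a polynomial bound by a Chernoff argument over these scales; the independence needed for the Chernoff step is recovered from a finite-speed-of-propagation (cone-of-light) estimate. I describe the open-arm case with $p\le p_c(t)$, the case of $\arm^*_n$ with $p\ge p_c(t)$ being obtained verbatim after interchanging the roles of $\cir$ and $\cir^*$ and of \eqref{eq:RSW_cir} and \eqref{eq:RSW_cir2}. Set $m_j=3^j$, $A^{(j)}=B(0,3m_j)\setminus B(0,m_j)$, and let $J=J(n)=\{\,j\equiv 0\!\!\pmod 3:\ n^{1/4}\le 3^j,\ 3^{j+1}\le n^{1/2}\,\}$, so that $|J|\ge c_0\log n$ for a universal constant $c_0>0$ and all $n$ large, and any two annuli $A^{(j)},A^{(j')}$ with $j,j'\in J$, $j\ne j'$, lie at distance at least $2n^{1/4}$. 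A closed $*$-circuit contained in $A^{(j)}$ separates $\partial B(0,n^{1/4})$ from $\partial B(0,n^{1/2})$ and cannot be crossed by an open path, whence $\arm_{\sqrt n}\subseteq\bigcap_{j\in J}\big(\cir^*(m_j)\big)^c$ and therefore
\[
\P_{p,\frac1k}\big[\arm_{\sqrt n}\,\big|\,\mathscr{P}^k\big]\ \le\ \P_{p,\frac1k}\Big[\,\textstyle\bigcap_{j\in J}\big(\cir^*(m_j)\big)^c\ \Big|\ \mathscr{P}^k\Big].
\]

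The technical core is a localisation step. Conditionally on $\mathscr{P}^k$, the opinion $\eta_t(x)$ is a function of $(\eta_0,\mathscr{P})$ restricted to the space-time cone obtained by tracing back along rings of $\mathscr{P}^k$; exactly as in the derivation of \eqref{eq:correlation_decay_radius}, a union bound over ``chains of $\rho$ successive rings'' shows that the spatial radius $\mathcal R_x$ of this cone satisfies $\P_{kt}[\mathcal R_x>\rho]\le(4kt)^\rho/\rho!\le 2^{-\rho}$ once $\rho\ge C(t)k$. Fix $\rho=\rho(k,t,\gamma)$ of order $\log n$ (so $\rho\ge C(t)k$ and $\rho\ll n^{1/4}$ for $n$ large), put $W_j=\{x:\dist(x,A^{(j)})\le\rho\}$, and let $G_j$ be the $\mathscr{P}^k$-measurable event — in fact measurable with respect to $\mathscr{P}^k$ restricted to $W_j$, provided the backward exploration is halted on $\partial W_j$ — that no cone rooted at a point $(x,t)$ with $x\in A^{(j)}$ leaves $W_j$. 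Then $\P_{kt}[G_j^c]\le|W_j|\,2^{-\rho}$, which we arrange to be $\le\tfrac12 n^{-\gamma}/|J|$; hence $G:=\bigcap_{j\in J}G_j$ has $\P_{kt}[G^c]\le\tfrac12 n^{-\gamma}$, and for $n$ large the blocks $W_j$, $j\in J$, are pairwise disjoint. On $G$, the events $\cir^*(m_j)$ are, conditionally on $\mathscr{P}^k$, functions of $(\eta_0,\mathscr{P})$ on the disjoint blocks $W_j$ and hence independent under $\P_{p,\frac1k}[\,\cdot\mid\mathscr{P}^k]$; moreover, truncating the dynamics to $W_j$ produces a random variable $\tilde q_j$, measurable with respect to $\mathscr{P}^k$ restricted to $W_j$, that coincides with $q_j:=\P_{p,\frac1k}[\cir^*(m_j)\mid\mathscr{P}^k]$ on $G_j$, and the $\tilde q_j$, $j\in J$, are independent. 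Consequently, on $G$,
\[
\P_{p,\frac1k}\big[\arm_{\sqrt n}\,\big|\,\mathscr{P}^k\big]\ \le\ \prod_{j\in J}(1-q_j)\ \le\ \big(1-\tfrac{\useconstant{c:cir}}{2}\big)^{\#\{j\in J\,:\,q_j>\useconstant{c:cir}/2\}}.
\]

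It remains to bound the number of bad scales $j\in J$, those with $q_j\le\useconstant{c:cir}/2$. By Lemma~\ref{lemma:quenched_cir}, for $n$ large one has $\P_{kt}[\tilde q_j\le\useconstant{c:cir}/2]\le\P_{kt}[G_j^c]+\P_{kt}[q_j\le\useconstant{c:cir}/2]\le p_{\mathrm{bad}}(k):=8/(\useconstant{c:cir}^2 k)$, and $p_{\mathrm{bad}}(k)\to 0$ as $k\to\infty$. Since the $\tilde q_j$ are independent, a Chernoff bound gives $\P_{kt}\big[\#\{j\in J:\tilde q_j\le\useconstant{c:cir}/2\}\ge|J|/2\big]\le(4p_{\mathrm{bad}}(k))^{|J|/2}\le n^{(c_0/2)\log(4p_{\mathrm{bad}}(k))}$, and choosing $k_0=k_0(\gamma,t)$ so large that $4p_{\mathrm{bad}}(k_0)\le e^{-4\gamma/c_0}$ makes the right-hand side at most $n^{-2\gamma}\le\tfrac12 n^{-\gamma}$ for every $k\ge k_0$ and $n$ large. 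On $G$ one has $\tilde q_j=q_j$ for all $j\in J$, so with $\P_{kt}$-probability at least $1-n^{-\gamma}$ we are on an event on which fewer than $|J|/2$ scales are bad; there the two displayed inequalities give $\P_{p,\frac1k}[\arm_{\sqrt n}\mid\mathscr{P}^k]\le(1-\useconstant{c:cir}/2)^{|J|/2}\le n^{-\nu}$ with $\nu:=\tfrac{c_0}{4}\log\tfrac{2}{2-\useconstant{c:cir}}>0$ (depending only on $t$, through $\useconstant{c:cir}$), after at worst a harmless decrease of $\nu$ to make the last inequality hold for all large $n$. This yields the claimed bound. I expect the localisation/independence step to be the main obstacle: one must simultaneously make the circuit events measurable with respect to disjoint coordinate blocks and replace the $q_j$ by genuinely independent proxies, so that the per-scale $O(1/k)$ estimate of Lemma~\ref{lemma:quenched_cir} can be boosted into a polynomial bound over the $\Theta(\log n)$ scales.
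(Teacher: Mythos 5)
Your proposal is correct and follows the same route as the paper's proof: you block the arm event by closed $*$-circuits in $\Theta(\log n)$ pairwise disjoint annuli between radii $n^{1/4}$ and $n^{1/2}$, use the cone-of-light estimate to localise each circuit event to a small neighbourhood of its annulus, apply Lemma~\ref{lemma:quenched_cir} to each scale, and amplify the per-scale $O(1/k)$ bound via a concentration argument over the scales. The paper uses the event $E_n$ (all past cones of light rooted in $B(0,n^{1/2})$ have radius $\le n^{1/4}$) and then bounds $\P_{kt}[E_n^c\cup D^c]$ by taking a supremum over subsets $I\subset J$ with $|I|\ge |J|/2$; you instead take a much thinner buffer $\rho=\Theta(\log n)$ and introduce genuinely independent proxies $\tilde q_j$ (measurable w.r.t.\ $\mathscr{P}^k$ restricted to disjoint blocks $W_j$), which lets you run an honest Chernoff bound for the number of ``bad'' scales. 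Your version is in fact cleaner on one point: the paper writes $\P_{kt}[D^c]\le\sup_I\P_{kt}[E_n\cap\bigcap_{j\in I}D_j^c]$ where a union over the at most $2^{|J|}$ choices of $I$ should really appear; this factor $2^{|J|}$ is exactly what your Chernoff step produces explicitly as the $4^{|J|/2}$ inside $(4p_{\mathrm{bad}})^{|J|/2}$, and is harmless once $k$ is large, but your proxy argument makes the accounting transparent. The rest of the bookkeeping in your write-up (choice of $\rho$ so that $\P_{kt}[G_j^c]$ is negligible, the per-scale probability $(1-c_{\ref{c:cir}}/2)$, and the resulting $\nu$ depending only on $t$ while $k_0$ depends on $\gamma$ and $t$) matches the quantifier structure of the statement.
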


\par The proof of the above Proposition relies on observing that $\arm_{\sqrt n}(\eta_{0}, \mathscr{P})$ holds if, and only if, there is no closed $*$-circuit inside $B\left(0, n^{\sfrac{1}{2}}\right) \setminus B\left(0, n^{\sfrac{1}{4}}\right)$. Since it is possible to find a logarithmic amount of disjoint and distant annuli in this set, we can repeatedly apply Lemma~\ref{lemma:quenched_cir} to obtain that the probability of not having such a circuit in any of the annuli is small. A complete proof requires additional care to control dependencies between the disjoint annuli, and we postpone it to Section~\ref{sec:one_arm}.

\par To conclude this section, we present cone-of-light estimates for the denser collection of clock ticks. Given~$\mathscr{P}^{k}$ we define the (past) cone of light $C_{k,t}^{\leftarrow}(x)$ to be the collection of vertices one needs to observe in order to determine $\eta_{s}(x)$, for all $s \in [0,t]$, varying over every possible pairs~$(\eta_{0}, \mathscr{P})$ of initial configurations and clocks selections. We also define the future cone of light $C_{k,t}^{\rightarrow}(x)$ as the set of vertices that can be influenced by~$x$ up to time~$t$, that is,
\begin{equation}
C_{k,t}^{\rightarrow}(x):=\{y\in\Z^2; x\in  C_{k,t}^{\leftarrow}(y)       \}.
\end{equation}
\begin{prop}[Cone-of-light estimates]\label{prop:col}
Given $k \in \N$ and $t \geq 0$, if $n$ is large enough,
\begin{equation}
\begin{split}
\P_{kt}\Big[C_{k,t}^{\leftarrow}(x) \cap \partial B(x,n) \neq \emptyset \Big] &\leq e^{-\frac{1}{8}n \log n},
\\
\P_{kt}\Big[ C_{k,t}^{\rightarrow}(x) \cap \partial B(x,n) \neq \emptyset \Big] &\leq e^{-\frac{1}{8}n \log n}.
\end{split}
\end{equation}
\end{prop}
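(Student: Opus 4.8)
The plan is to estimate the size of the past cone of light $C_{k,t}^{\leftarrow}(x)$ by a greedy/first-passage comparison with a branching-type growth process whose rate is governed by the denser clock collection $\mathscr{P}^k$, and then observe that the future cone estimate follows from the past cone estimate by a union bound together with the fact that $y \in C_{k,t}^{\rightarrow}(x)$ iff $x \in C_{k,t}^{\leftarrow}(y)$.

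First I would make precise the recursive description of $C_{k,t}^{\leftarrow}(x)$: to know $\eta_s(x)$ for all $s \le t$, one needs the initial opinion of $x$ together with, for each ring time $\tau$ of $\mathscr{P}^k_x$ in $[0,t]$, the opinions $\eta_\tau(y)$ for the four neighbors $y \sim x$; recursing backward in time on each such neighbor gives the cone. Thus $C_{k,t}^{\leftarrow}(x)$ is contained in the set of vertices reachable from $x$ by a path $x = x_0 \sim x_1 \sim \dots \sim x_\ell$ together with a decreasing sequence of times $t \ge \tau_1 > \tau_2 > \dots > \tau_\ell \ge 0$ with $\tau_j \in \mathscr{P}^k_{x_{j-1}}$. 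In particular, if $C_{k,t}^{\leftarrow}(x)$ reaches $\partial B(x,n)$, there is such a path of length at least $n$, i.e.\ a sequence of $n$ nested clock rings along adjacent sites in decreasing time. I would then bound the probability of this event. Fixing the combinatorial type of the path (there are at most $4^n$ self-avoiding — or even arbitrary — nearest-neighbor paths of length $n$ from $x$), the probability that along a \emph{fixed} such path there exist clock rings $\tau_1 > \dots > \tau_n$ in $[0,t]$ with $\tau_j \in \mathscr{P}^k_{x_{j-1}}$ is exactly the probability that $n$ independent rate-$k$ Poisson processes on $[0,t]$ admit a decreasing selection, which is $\P[\,N_1 \ge 1, \dots, N_n \ge 1,\ \text{and the } n \text{ chosen points are decreasing}\,]$. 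Integrating over the ordered simplex, this equals
\begin{equation*}
\int_{0 < s_n < \dots < s_1 < t} k^n\, e^{-k s_1}\cdots e^{-k s_n}\, \d s_1 \cdots \d s_n \;\le\; \frac{(kt)^n}{n!},
\end{equation*}
where the inequality comes from dropping the exponential factors and from $\mathrm{vol}\{0<s_n<\dots<s_1<t\} = t^n/n!$. Hence
\begin{equation*}
\P_{kt}\Big[C_{k,t}^{\leftarrow}(x) \cap \partial B(x,n) \neq \emptyset\Big] \;\le\; 4^n \cdot \frac{(kt)^n}{n!} \;=\; \frac{(4kt)^n}{n!}.
\end{equation*}
By Stirling, $n! \ge (n/e)^n$, so the right-hand side is at most $(4ekt/n)^n = \exp\{-n\log n + n\log(4ekt)\}$, which for $n$ large (depending on $k$ and $t$) is bounded by $e^{-\frac14 n\log n}$, and a fortiori by $e^{-\frac18 n\log n}$. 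This proves the first inequality.

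For the second inequality, I would use the identity $C_{k,t}^{\rightarrow}(x) = \{y : x \in C_{k,t}^{\leftarrow}(y)\}$. If $C_{k,t}^{\rightarrow}(x)$ meets $\partial B(x,n)$, then there is a vertex $y$ with $\pnorm{\infty}{y-x}=n$ and $x \in C_{k,t}^{\leftarrow}(y)$, which in particular forces $C_{k,t}^{\leftarrow}(y)$ to have radius at least $n$, hence to meet $\partial B(y,n)$. A union bound over the $O(n)$ vertices $y$ on $\partial B(x,n)$ and the first estimate give a bound $O(n)\, e^{-\frac14 n\log n} \le e^{-\frac18 n\log n}$ for $n$ large. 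The main technical point to get right is the combinatorial/probabilistic bound on nested decreasing Poisson rings along a path — in particular ensuring the $1/n!$ factor survives the $4^n$ path count, which is exactly what makes the bound superexponential; everything else is a routine union bound and Stirling estimate.
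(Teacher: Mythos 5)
Your proof is correct and follows essentially the same route as the paper: decompose the event into nearest-neighbor paths carrying a nested, time-decreasing sequence of clock rings, bound a fixed path's probability by the ordered-simplex volume to get $(kt)^n/n!$, and note that this factorial decay overwhelms the $4^n$ path count to yield the super-exponential bound. The only notable variation is your treatment of the future cone via the duality $y\in C_{k,t}^{\rightarrow}(x)\Leftrightarrow x\in C_{k,t}^{\leftarrow}(y)$ plus a union bound over $\partial B(x,n)$, which is a clean way to make precise the ``analogous reasoning'' the paper leaves implicit (the paper uses the Poisson tail bound $\P[\mathrm{Poisson}(kt)\geq m]$ in place of your direct simplex integral, but the estimates are interchangeable). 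One small imprecision: the integrand $k^n e^{-ks_1}\cdots e^{-ks_n}$ is not literally the density of the event in question; the clean justification for the $(kt)^n/n!$ bound is that it equals the \emph{expected number} of decreasing selections $\tau_1>\dots>\tau_n$ (by the Mecke formula, with density $k^n$ on the ordered simplex), which dominates the probability of at least one — this also handles the case of a non-self-avoiding path where the same vertex's process appears more than once.
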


\begin{proof}
Without loss of generality, we consider $x=0$ and prove the bound for~$C_{k,t}^{\leftarrow}(x)$, the other bound following by analogous reasoning. Notice that, in order for $C_{k,t}^{\leftarrow}(0)$ to intersect $\partial B(0, n)$, it is necessary that there exists a path of length at least~$n$ whose vertices' associated Poisson clocks ring in decreasing order. That is, there must exist a (not necessarily simple) path~$0=x_0,x_1,\dots,x_m\in\partial B(0,n)$, $m\geq n$, and a sequence of times
\begin{equation*}
t \geq t_0>t_1>\dots>t_m \text{ such that }t_j \text{ is a mark in }\mathscr{P}^{k}_{x_j}.
\end{equation*}
Combining the fact that these clocks are i.i.d with distribution $\expo(k)$, the relation between Poisson and Exponential distributions, and union bounds, we obtain
\begin{equation}
\begin{split}
\lefteqn{\P_{kt}\Big[C_{k,t}^{\leftarrow}(x) \cap \partial B(x,n) \neq \emptyset \Big] }\phantom{********}
\\
&\leq \sum_{m\geq n}\P_{kt} \left[  \begin{array}{c} \text{there exists a path of size } m \\ \text{starting at $0$ such that all clocks} \\ \text{ring before time $t$ in decreasing order} \end{array} \right] \\
& \leq \sum_{m\geq n} 4^m \P[\poisson(kt) \geq m] \\
& = e^{-kt}\sum_{m \geq n} 4^m \sum_{j \geq m} \frac{(kt)^{j}}{j!} 
\\
&\leq e^{-kt}\sum_{m \geq n}   e^{kt} \frac{(4tk)^{m}}{m!} \\
& \leq e^{4tk} \frac{(4tk)^{n}}{\left(\frac{n}{2}\right)^{\frac{n}{2}}}\\
& \leq e^{-\frac{1}{8} n \log n},
\end{split}
\end{equation}
if $n \geq \left(16 kt \right)^{8}$. This concludes the proof. 
\end{proof}

\newconstant{c:cor_decay}

\par The bound in~\eqref{eq:correlation_decay_radius} follows directly from Proposition~\ref{prop:col}. We will later need a stronger decoupling inequality in order to obtain bounds on the one-arm event's probability. The following proposition provides a generalized form of~\eqref{eq:correlation_decay_radius}:
\begin{prop}
	\label{prop:decoup}
	For every $t \geq 0$, there exists a positive constant $\useconstant{c:cor_decay}>0$ depending on~$t$ such that the following holds: for~$L,R>0$ and any pair of events $A$ and $B$ with respective supports inside the sets $x + [-L,2L]^2$ and $y + [-L,2L]^2$, with $\pnorm{\infty}{x-y} \geq 3L+R$, and $p \in [0,1]$,
	\begin{equation}\label{eq:strong_correlation_decay}
	\Big|\P_{p,t}[A \cap B] - \P_{p,t}[A]\P_{p,t}[B] \Big| \leq \useconstant{c:cor_decay}^{-1}L^{2}e^{-\useconstant{c:cor_decay}R \log R}.
	\end{equation}
\end{prop}

\begin{proof}
	If $C_{1,t}^{\leftarrow}(z) \cap \partial  B\left(z,\frac{R}{2}\right) = \emptyset$ for all $z \in  (x + [-L,2L]^2) \cup (y + [-L,2L]^2)$, then the occurrence of $A$ and $B$ are determined by disjoint (and hence independent) parts of the graphical construction.
	Defining the event
	\begin{equation*}
	\left\{\begin{array}{c} \text{For every } z \in  (x + [-L,2L]^2) \cup (y + [-L,2L]^2), \\ C_{1,t}^{\leftarrow}(y) \cap \partial B\left(y,\frac{R}{2}\right) = \emptyset \end{array}\right\} =: \mathrm{Dec}(L,R,x,y),
	\end{equation*}
	we obtain
	\begin{equation}
	\P_{p,t}\left[\mathrm{Dec}(L,R,x,y)^C\right] \leq 18 L^{2}e^{\frac{1}{16}R \log \frac{R}{2}},
	\end{equation}
	where the last bound above is a consequence of Proposition~\ref{prop:col} for large values of $R$. For sufficiently large~$R$, by intersecting the events~$A$, $B$, and $A\cap B$ with~$\mathrm{Dec}(L,R,x,y)$ and~$\mathrm{Dec}(L,R,x,y)^C$, we can show that
	\begin{equation*}
	\Big|\P_{p,t}[A \cap B] - \P_{p,t}[A]\P_{p,t}[B] \Big| \leq 5 \P_{p,t}\left[\mathrm{Dec}(L,R,x,y)^C\right] \leq 90 L^{2}e^{\frac{1}{16}R \log \frac{R}{2}}
	\end{equation*}
	Choosing the constant in~\eqref{eq:strong_correlation_decay} to cover the cases where~$R$ is not large enough concludes the proof.
\end{proof}

\section{Influence and space-pivotality}\label{sec:influence}
~
Given a realization of $\mathscr{P}^{k}$, the quenched influence of a bit $x \in \Z^2$ or $(x,s) \in \{x\} \times \mathscr{P}^{k}_{x}$ is defined respectively as
\begin{equation}
\infl_{x}(f_{n}, \mathscr{P}^{k}) = \P_{p, \frac{1}{k}}\left[ f_{n}(\eta_{0}, \mathscr{P}) \neq f_{n}(\eta_{0}^{x}, \mathscr{P})\middle|\mathscr{P}^{k} \right],
\end{equation}
and
\begin{equation}
\infl_{(x,s)}(f_{n}, \mathscr{P}^{k}) = \P_{p, \frac{1}{k}}\left[ f_{n}(\eta_{0}, \mathscr{P}) \neq f_{n}(\eta_{0}, \mathscr{P}^{(x,s)})\middle|\mathscr{P}^{k} \right],
\end{equation}
where $\eta_{0}^{x}$ and $\mathscr{P}^{(x,s)}$ are obtained from $\eta_{0}$ and $\mathscr{P}$ by exchanging the entries at $x$ and $(x,s)$, respectively. 

\par The crossing functions $f_{n}$ are monotone non-decreasing in the space variables $\eta_{0}$. Furthermore, the set $\bigcup_{y\in R_n}C_{k,t}^{\leftarrow}(y)$ comprised of vertices whose opinions at time~$0$ can influence the output of~$f_n(\eta_0,\mathscr{P})$ is almost surely finite.
Classical arguments then show that Russo's Formula applies to the derivative with respect to $p$ and one obtains
\begin{equation}
\label{eq:russo}
\frac{\partial}{\partial p}\E_{p,\frac{1}{k}}\left[f_n(\eta_{0},\mathscr{P})\middle| \mathscr{P}^{k}\right] = \sum_{x \in \Z^2} \infl_{x}(f_n, \mathscr{P}^{k}).
\end{equation}
Since
\begin{equation}
\left|\frac{\partial}{\partial p}\E_{p,\frac{1}{k}}\left[f_n(\eta_{0},\mathscr{P})\middle| \mathscr{P}^{k}\right]\right| \leq \bigg|  \bigcup_{y\in R_n}C_{k,t}^{\leftarrow}(y) \bigg|,
\end{equation}
as a direct consequence of the bounded convergence Theorem and Proposition~\ref{prop:col}, it is possible to conclude
\begin{equation}\label{eq:russo_inside_expectation}
\begin{split}
\frac{\partial}{\partial p}\E_{p,t}\left[f_n(\eta_{0},\mathscr{P})\right] 
&=
\E_{kt}\left[\frac{\partial}{\partial p}\E_{p,\frac{1}{k}}\left[f_n(\eta_{0},\mathscr{P})\middle| \mathscr{P}^{k}\right]\right] 
\\
&= \E_{kt}\bigg[\sum_{x \in \Z^2} \infl_{x}(f_n, \mathscr{P}^{k})\bigg].
\end{split}
\end{equation}

\newconstant{c:piv}

\par Regarding pivotality of clock ticks, we present a proposition that allows us to relate it to space-pivotality, provided we are in the event where the collection $\mathscr{P}^{k}$ is well behaved. Recall that $R_{n}=[1,n]^{2}$ and that $C_{k,t}^{\rightarrow}(x)$ denotes the future cone of light of the vertex $x$ associated to the collection of clocks~$\mathscr{P}^{k}$. For $\epsilon >0$, consider the event
\begin{equation}\label{eq:large_cone}
E(\epsilon) = \left\{\begin{array}{c} \text{there exists } x \in [-(n-1),2n]^{2} \text{ such that} \\ C_{k,t}^{\rightarrow}(x) \cap \partial B(x,\epsilon \log n) \neq \emptyset \end{array} \right\}.
\end{equation}
Our next proposition relates time-pivotality to space-pivotality, provided we are in the event $E(\epsilon)^{c}$.
\begin{prop}\label{prop:piv}
Given $k \geq 2$ and $p \in (0,1)$, there exists a positive constant $\useconstant{c:piv}=\useconstant{c:piv}(k,p)>0$ such that the following holds: for any $\mu>0$, there exists $\epsilon>0$ such that, for any bit associated to~$(x,s) \in \{x\} \times \mathscr{P}_{x}^{k}$,
\begin{equation}
\infl_{(x,s)}(f_n, \mathscr{P}^{k}) \mathbf{1}_{E(\epsilon)^{c}}(\mathscr{P}^{k}) \leq \useconstant{c:piv} n^{\mu} \sum_{y  \, \in \, \partial B(x, 3\epsilon \log n)} \infl_{y}(f_n, \mathscr{P}^{k}).
\end{equation}
Furthermore, if $p$ varies in a compact subset of $(0,1)$, the value of $\epsilon$ and $\useconstant{c:piv}$ can be chosen to be uniformly positive and bounded.
\end{prop}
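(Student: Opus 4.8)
My plan is to reduce the influence of the clock ringing $(x,s)$ to a purely \emph{local} pivotality event for the crossing, at scale $\epsilon\log n$ around $x$, and then transfer that event into the spatial influences on the sphere $\partial B(x,3\epsilon\log n)$ via a finite-energy surgery whose probabilistic cost is only $n^{O(\epsilon)}$. Throughout I work on $E(\epsilon)^{c}$, on which every cone of light entering $R_n$ has radius at most a fixed multiple of $\epsilon\log n$, and I abbreviate $D:=B(x,\epsilon\log n)$ and $\rho:=\min\{p,1-p\}\in(0,1)$. \emph{Step 1 (localising the ringing).} If $(x,s)$ has positive influence then $C_{k,t}^{\rightarrow}(x)$ meets $R_n$, hence $C_{k,t}^{\rightarrow}(x)\subseteq D$ on $E(\epsilon)^{c}$; therefore toggling the acceptance of the ringing at $(x,s)$ alters the time-$t$ opinions only inside $D$. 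Writing $\eta_t,\widetilde\eta_t$ for the two resulting configurations, they agree on $\Z^2\setminus D$, and if $(x,s)$ is pivotal then $H(\lambda n,n)$ holds for exactly one of them. As $H(\lambda n,n)$ is increasing in the opinion configuration and the two configurations agree on $R_n\setminus D$, monotonicity forces the event
\[
J(x):=\bigl\{\text{opening all of }D\cap R_n\text{ yields }H(\lambda n,n),\ \text{while closing all of }D\cap R_n\text{ does not}\bigr\},
\]
which is measurable with respect to the time-$t$ opinions on $R_n\setminus D$ alone. Hence $\infl_{(x,s)}(f_n,\mathscr{P}^{k})\,\mathbf 1_{E(\epsilon)^{c}}(\mathscr{P}^{k})\le\P_{p,\frac{1}{k}}\!\bigl[J(x)\mid\mathscr{P}^{k}\bigr]$.

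\emph{Step 2 (the surgery).} On $J(x)$ the crossing produced by opening $D$ must pass through $D$, so $\eta_t$ carries two disjoint open paths in $R_n\setminus D$ joining $\partial D$ to the left, resp.\ the right, side of $R_n$. Conditioning on the initial opinions and accepted ringings outside a ball $B(x,r_0)$, with $r_0$ a fixed multiple of $\epsilon\log n$, freezes these connections outside $B(x,r_0)$, where they lie in two disjoint clusters. I then fix $y\in\partial B(x,3\epsilon\log n)$ and resample the initial opinions inside $B(x,r_0)$, forcing $O(\epsilon\log n)$ of them to build: a $3$-wide all-open initial tube from the frozen left cluster to one neighbour of $y$; a $3$-wide all-open initial tube from the frozen right cluster, through $D$, to another neighbour of $y$; a $3$-wide all-closed tube at each of the two remaining neighbours of $y$; thin all-closed insulating layers flanking the open tubes; and a $3$-wide all-closed $\ast$-circuit separating $B(x,3\epsilon\log n)$ from the exterior except where the two open tubes cross it. The decisive fact I use is that a $3$-wide monochromatic initial tube stays monochromatic at every later time, irrespective of the surrounding opinions and of the clocks. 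With these forcings $y$ has exactly two open and two closed neighbours at every time, so $\eta_t(y)=\eta_0(y)$; the insulating layers and the closed circuit prevent any other open connection from entering $B(x,3\epsilon\log n)$; and the two disjoint frozen clusters are joined through $y$ precisely when $\eta_0(y)=1$. Hence $\eta_0(y)$ is pivotal for $f_n$ on this event.

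\emph{Step 3 (accounting and conclusion).} The prescribed pattern on the $O(\epsilon\log n)$ resampled sites has $\P_{p,\frac{1}{k}}$-probability at least $\rho^{c_0\epsilon\log n}$ for an absolute constant $c_0$. By insertion tolerance (the initial opinions form an independent product), conditioning on the frozen data and integrating out gives, for each relevant exit vertex $y\in\partial B(x,3\epsilon\log n)$ of the left arm,
\[
\P_{p,\frac{1}{k}}\!\bigl[J(x),\ \text{the left arm exits through }y\ \big|\ \mathscr{P}^{k}\bigr]\ \le\ \rho^{-c_0\epsilon\log n}\,\infl_y(f_n,\mathscr{P}^{k})\ =\ n^{c_0\log(1/\rho)\,\epsilon}\,\infl_y(f_n,\mathscr{P}^{k}).
\]
Summing over $y$ and combining with Step 1,
\[
\infl_{(x,s)}(f_n,\mathscr{P}^{k})\,\mathbf 1_{E(\epsilon)^{c}}(\mathscr{P}^{k})\ \le\ n^{c_0\log(1/\rho)\,\epsilon}\sum_{y\in\partial B(x,3\epsilon\log n)}\infl_y(f_n,\mathscr{P}^{k}),
\]
so it suffices to take $\epsilon:=\mu/(c_0\log(1/\rho))$; since $\rho$ stays bounded away from $0$ and $1$ when $p$ ranges over a compact subset of $(0,1)$, both $\epsilon$ and the resulting constant can be chosen uniformly there. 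The boundary cases, where $x$ is within $3\epsilon\log n$ of $\partial R_n$ and one arm is replaced by a direct link to a side of $R_n$, are handled by the same construction.

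\emph{Where the difficulty lies.} The heart of the argument is Step 2: converting a macroscopic pivotality feature of $\eta_t$ into the pivotality of a \emph{single} initial bit while touching only $O(\log n)$ initial sites — far fewer than the $\Theta((\log n)^2)$ sites in the relevant cone of light. This is feasible only because majority dynamics stabilises thick monochromatic tubes, so that a sparse prescribed initial pattern controls the time-$t$ configuration along a curve; the analogous move is impossible for the contact process, where an infected tube of any fixed width dies out, which is exactly why the method does not transfer to that model. The remaining points — that the resampling creates no spurious connections, the fine print on the cone-of-light control of $x\notin R_n$, and the boundary cases — are routine once this gadget is in place.
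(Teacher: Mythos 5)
Your Step~1 is correct and even slightly more economical than the paper: passing through the purely geometric event~$J(x)$, which is measurable with respect to the time-$t$ opinions on $R_n\setminus D$, avoids the explicit factor of~$k$ that the paper pays to guarantee the clock ringing is present. You have also identified the decisive model-specific fact shared by both arguments: a monochromatic nearest-neighbor cycle (or tube) is preserved by majority dynamics, so one can freeze a barrier at time~$0$ that survives to time~$t$.

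From there, however, the paper takes a much shorter route, and your surgery has gaps that are not just cosmetic. The paper does not build any gadget: it observes that on the pivotal event all time-$t$ crossings must pass through $B(x,\epsilon\log n)$; that forcing $\eta_0\equiv 0$ on the single circuit $\partial B(x,3\epsilon\log n)$ therefore destroys all crossings (the circuit is stable, and $\eta_t'\leq\eta_t$ by monotonicity); and then it closes the at most $24\epsilon\log n+8$ initially-open sites on that circuit one at a time. By telescoping, some site on the circuit must be space-pivotal for one of the intermediate configurations, and the probabilistic cost of reaching that configuration is at most $(p\wedge(1-p))^{-(24\epsilon\log n+8)}=n^{O(\epsilon)}$. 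No conditioning, no insulated tubes, no canonical exit vertex.

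Your Step~2, by contrast, tries to produce a configuration in which $\eta_0(y)$ is pivotal, and this is where the argument is leaky. Conditioning on the data outside $B(x,r_0)$ does not freeze the time-$t$ opinions there: on $E(\epsilon)^c$ every $\eta_t(z)$ with $\mathrm{d}(z,B(x,r_0))<\epsilon\log n$ still depends on the resampled sites, so the arms are only frozen outside $B(x,r_0+\epsilon\log n)$ and your tubes must be extended accordingly — a fixable but unaddressed point. More seriously, you place the protective closed $\ast$-circuit at radius $3\epsilon\log n$, whereas the resampling perturbs the time-$t$ configuration out to radius $r_0+\epsilon\log n$; after the surgery the two arms might reconnect in the annulus between these two radii without ever entering $B(x,3\epsilon\log n)$, and nothing you write rules this out. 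What is actually needed is a closed barrier spanning the whole resampled region — precisely the role a single closed circuit at the outer scale plays in the paper. Finally, the partition of $J(x)$ by ``the exit vertex $y$ of the left arm'' is not well-defined without a canonical choice, and without one the double-counting control in Step~3 is not justified. I would recommend adopting the paper's direct circuit-and-telescoping argument: it exploits the same stability fact but needs only $O(\log n)$ flips on a single circuit and no conditioning or gadgetry.
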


\begin{proof}
Observe first that $|\partial B(x,3\epsilon \log n)| \leq 24 \epsilon \log n +8$.

Fix a configuration $\mathscr{P}^{k}$ in $E(\epsilon)^{c}$ and assume that the presence of the clock tick $(x,s)$ is pivotal. This can happen in two ways: first, it might be that adding the clock tick allows us to obtain a crossing, while, with the removal of such clock tick, no open crossings exist. The second possibility is the opposite: the addition of the clock tick prevents the existence of a crossing, while its removal implies on the presence of a crossing. We will consider only the first case, since the second can be treated similarly.

When the clock tick is present in the configuration (which we can assure by paying a finite multiplicative factor of $k$ in the probabilities), all possible crossings of the square $R_{n}$ intersect the cone of light $C_{k,t}^{\rightarrow}(x)$. In particular, since the clock tick is pivotal and we are in the event $E(\epsilon)^{c}$, these crossings necessarily intersect the box $B(x, 3\epsilon \log n)$. Hence, if we declare all vertices in $\partial B(x,3\epsilon \log n)$ as closed at time zero, no crossing can be found at time $t$. \emph{This is because ``monochromatic'' nearest-neighbor cycles are stable in the majority dynamics}. Every vertex in a ``monochromatic'' cycle is surrounded by at least two neighbors of the same opinion, and therefore its opinion remains forever unchanged. This defines a $2^{|\partial B(x,3\epsilon \log n)|}$-to-one map of the initial configurations.

We now proceed by successively changing each entry in~$(\eta_0(y))_{y\in\partial B(x,3\epsilon \log n)}$ which is one to zero. After all changes are performed, we obtain a configuration that has no crossing at time $t$. In particular, at some step, one of the entries of~$(\eta_0(y))_{y\in\partial B(x,3\epsilon \log n)}$ is space-pivotal for the configuration. Since in order to perform each of these changes we need to pay a multiplicative factor in the probabilities that is bounded from above by $\left(p \wedge (1-p) \right)^{-1}$, we can estimate
\begin{multline}
\infl_{(x,s)}(f_n, \mathscr{P}^{k}) \mathbf{1}_{E(\epsilon)^{c}}(\mathscr{P}^{k}) \\
\leq k 2^{|\partial B(x,3\epsilon \log n)|}\left(p \wedge (1-p) \right)^{-(24 \epsilon \log n +8)} \sum_{y  \, \in \, \partial B(x,3\epsilon \log n)} \infl_{y}(f_n, \mathscr{P}^{k}),
\end{multline}
where the factor $2^{|\partial B(x,3\epsilon \log n)|}$ comes from the cardinality of the pre-image of the mapping constructed. The proof is completed by choosing $\epsilon>0$ small enough.
\end{proof}

\section{Low-revealment algorithms}\label{sec:alg}
~
\par In order to apply the OSSS inequality to the crossing functions, we need to develop an algorithm that determines the existence of such crossings in the quenched case, when the realization of $\mathscr{P}^{k}$ is fixed, and bound its revealment. This is the goal of this section, where we define an algorithm with the desired properties and provide bounds on its revealment.

\par We begin by presenting the algorithm we will study. This algorithm will be a simple exploration process: we start with a random vertical line contained in the rectangle and query the opinion at time $t$ of all vertices that are in the given line. When we have this realization, we start exploring the components of open vertices that intersect this line. The existence of a crossing is equivalent to the existence of an open component that intersects this line and connects both sides of the rectangle.

\par For the rest of this subsection, we fix a realization $\mathscr{P}^{k}$ of the denser collection of clock ticks. Since we are working with a fixed realization of~$\mathscr{P}^{k}$, the sets $C_{k,t}^{\leftarrow}(x)$ are not random and depend only on the collection~$\mathscr{P}^{k}$. Of course, when we reveal the realization of $\mathscr{P}_{y}$ together with $\eta_{0}(y)$, for all $y \in C_{k,t}^{\leftarrow}(x)$, we can determine $\eta_{s}(x)$, for all $s \in [0,t]$. In view of this, whenever we \emph{query} the state of a vertex $x \in \Z^2$, we observe the initial opinions and selection of clock ticks for all vertices $y \in C_{k,t}^{\leftarrow}(x)$.

\par We are now in position to present the algorithm we will consider. Recall that $R_{n}=[1,n]^2$ and the notation $\Lambda = \Z^2 \cup \{(x,s): x \in \Z^2, \, s \in \mathscr{P}^{k}_{x} \cap [0,t]\}$.
\begin{algorithm}\caption{(Existence of a horizontal open crossing)}\label{alg:crossing}
\begin{algorithmic}[1]
\State \textbf{Input:} $\mathscr{P}^{k}$ and $(\eta_{0}, \mathscr{P}) \in \{0,1\}^{\Lambda}$.
\State If there exists $x \in R_n$ and $y \in C_{k,t}^{\leftarrow}(x)$ such that $\pnorm{1}{x-y} \geq \log n$, query all vertices of $R_n$.
\State Choose an integer point $\ell$ uniformly in the set $\left[1, n\right]\cap \Z$.
\State Query all vertices of $R_{n}$ whose first space-coordinate is $\ell$, and declare these vertices as explored.
\State Proceed to query all vertices that are neighbors to an open explored vertex, and declare all these vertices explored.
\State Repeat Step 5 until all open connected components inside $R_n$ that intersect $\{\ell\} \times \Z$ are discovered. If there exists a connected open component inside $R_n$ that connects $\{1\} \times \Z$ to $\{n\} \times \Z$, return 1. Otherwise, return~0.
\end{algorithmic}
\end{algorithm}

\par Notice that Algorithm~\ref{alg:crossing} clearly determines the existence of open crossings, since any open crossing intersects any vertical line $\big(\{\ell\}\times \Z \big) \cap R_{n}$. Furthermore, one can define an analogous algorithm that determines the existence of a closed vertical $*$-crossing of the box. When analyzing the revealment of the algorithm, we will consider Algorithm~\ref{alg:crossing} for $p \leq p_{c}(t)$ and its alternative formulation in terms of closed vertical $*$-crossings for $p > p_{c}(t)$.

\par We now proceed to bound the revealment of Algorithm~\ref{alg:crossing} (the bound on the alternative version is obtained analogously). Observe first that the revealment depends only on the sites $y \in \Z^2$, since we reveal all clock ticks of a given site $y$ at once, together with its initial opinion. We can therefore talk about the revealment of a site $y \in \Z^2$. Given a vertex $y \in \Z^{2}$, there are three different possibilities that might lead us to reveal it. The first case that comes from Step 2 in the algorithm is when, for some $x \in R_{n}$, $C^{\leftarrow}_{k,t}(x)$ is large. Second, it might be the case that $y \in C_{k,t}^{\leftarrow}(z)$, for some site $z$ in the vertical line segment $\big(\{\ell\} \times \Z \big)\cap R_{n}$. Finally, there is the case when $y \in C_{k,t}^{\leftarrow}(z)$ and some vertex adjacent to $z$ is connected to the selected vertical line segment by an open path.

\par In order to bound the revealment, we consider each of the three cases separately. The first and second cases can be easily controlled. As for the third case, we need finer estimates given by the one-arm estimates provided by Proposition~\ref{prop:one_arm}.

\begin{prop}\label{prop:revealment}
Let $\mathcal{A}$ denote the Algorithm~\ref{alg:crossing}, and let~$\mathcal{A}^*$ denote the analogous algorithm that looks for vertical closed $*$-crossings. Consider the revealments
\begin{equation}
\delta_\mathcal{A}(\mathscr{P}^k):=\sup_{x\in R_n} \delta(\mathcal{A},x);\quad 
\delta_{\mathcal{A}^*}(\mathscr{P}^k):=\sup_{x\in R_n} \delta({\mathcal{A}^*},x)
\end{equation}
There exist $\nu>0$ and $k_{0}>0$ such that, for all $k \geq k_{0}$, there exists $n_{0}=n_{0}(k)$ such that, if $n \geq n_{0}$ and $p \leq p_{c}(t)$, then
\begin{equation}
\label{eq:revealmentA}
\P_{kt}\left[\delta_\mathcal{A}(\mathscr{P}^k) > n^{-\nu} \right] \leq n^{-50},
\end{equation}
and if $p \geq p_{c}(t)$, then
\begin{equation}
\label{eq:revealmentA*}
\P_{kt}\left[\delta_{\mathcal{A}^{*}}(\mathscr{P}^k) > n^{-\nu} \right] \leq n^{-50}.
\end{equation}
\end{prop}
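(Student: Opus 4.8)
The plan is to bound, for a fixed realization $\mathscr{P}^k$, the revealment $\delta(\mathcal{A},y)$ of each vertex $y \in \Z^2$ by splitting the event $\{\mathcal{A} \text{ queries } y\}$ into the three contributions described in the text: (i) Step~2 is triggered, i.e.\ some $x \in R_n$ has a cone of light $C^{\leftarrow}_{k,t}(x)$ extending beyond $\ell^1$-distance $\log n$; (ii) $y \in C^{\leftarrow}_{k,t}(z)$ for some $z$ on the random vertical line $\{x = x_0\} \cap R_n$; (iii) $y \in C^{\leftarrow}_{k,t}(z)$ for some $z$ adjacent to an open cluster that reaches the random line. For contribution (i), Proposition~\ref{prop:col} and a union bound over the $n^2$ vertices of $R_n$ give that the probability (over $\mathscr{P}^k$) that Step~2 is ever triggered is at most $n^2 e^{-\frac{1}{8}\log n\cdot\log\log n}$, which is smaller than $n^{-50}$ for $n$ large; on the complementary event no vertex of $R_n$ has a cone of light of $\ell^1$-diameter exceeding $\log n$, so we may work conditionally on this good event and every cone of light is contained in a box of side $O(\log n)$.

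For contribution (ii): on the good event, $y$ can lie in $\bigcup_{z:\,z_1=x_0}C^{\leftarrow}_{k,t}(z)$ only if $y$ is within $\ell^\infty$-distance $\log n$ of the line $\{x=x_0\}$, i.e.\ only for $O(\log n)$ of the $n/3$ admissible choices of $x_0$; since $x_0$ is uniform, this contributes at most $O(\log n / n)$ to the revealment. For contribution (iii), which is the heart of the argument: if $y \in C^{\leftarrow}_{k,t}(z)$ and some neighbor $w$ of $z$ is connected by an open path inside $R_n$ to the line $\{x=x_0\}$, then — since on the good event $z$ is within distance $\log n$ of $y$ — there is, for the initial-plus-evolution configuration, an open connection from a box of side $O(\log n)$ around $y$ to distance roughly $|y_1 - x_0|$ away (the horizontal separation between $y$ and the random line). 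Running $x_0$ uniformly over $[n/3,2n/3]$, for all but $O(\log n)$ values of $x_0$ this separation is at least $n^{1/2}$, so the existence of such a connection forces the event $\arm_{\sqrt m}$ (or its $*$-analogue) for a suitable $m$ comparable to a polynomial in $n$; here we use the one-arm estimate of Proposition~\ref{prop:one_arm}. Concretely, one fixes $\gamma = 60$ in Proposition~\ref{prop:one_arm} to get $\nu>0$ and $k_0$ so that, for $k\ge k_0$ and $n\ge n_0(k)$, the set of $\mathscr{P}^k$ for which the quenched one-arm probability exceeds $n^{-\nu}$ has $\P_{kt}$-measure at most $n^{-60}$; off that set, and off the Step~2 bad event, the quenched revealment of each $y$ — averaged over $x_0$ — is bounded by $O(\log n / n) + n^{-\nu}$. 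Adjusting $\nu$ downward so that $n^{-\nu}$ dominates $\log n/n$, and taking a union bound over the two exceptional $\mathscr{P}^k$-events of probability $\le n^{-60}$ each (which is $\le n^{-50}$ for large $n$), yields $\P_{kt}[\delta_{\mathcal{A}}(\mathscr{P}^k) > n^{-\nu}] \le n^{-50}$. The case $p > p_c(t)$ and the algorithm $\mathcal{A}^*$ is identical, using the $*$-crossing formulation of the algorithm and the $\arm^*_n$ half of Proposition~\ref{prop:one_arm}.

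The main obstacle is making contribution (iii) precise: one must translate the combinatorial statement ``$y$ is revealed in the exploration'' into a genuine one-arm connection event for the time-$t$ configuration, being careful that (a) the cone-of-light detour only costs an additive $O(\log n)$ in the radius and can be absorbed, (b) the exploration reveals the state of $z$ and its neighbor via $C^{\leftarrow}_{k,t}(z)$, so the relevant connectivity is the open connectivity of $\eta_t$ inside $R_n$, matching the definition of $\arm_{\sqrt m}$, and (c) the quantifier order in Proposition~\ref{prop:one_arm} ($\nu$ first, then $\gamma$, then $k_0$, then $n_0$) is respected when choosing constants, so that $\nu$ does not depend on the power $50$ we are targeting. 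Once the connection is set up, the probabilistic estimate is a direct application of Proposition~\ref{prop:one_arm} together with the uniform average over the line position $x_0$.
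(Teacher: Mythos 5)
Your proof follows essentially the same route as the paper's: decompose the ways a vertex can be revealed into (i) the Step~2 escape clause (bounded via Proposition~\ref{prop:col}), (ii) proximity of the cone of light to the random line (bounded by the uniform choice of $x_0$), and (iii) connection to the random line forcing a quenched arm event (bounded via Proposition~\ref{prop:one_arm}); then bound the $\P_{kt}$-measure of the bad $\mathscr{P}^k$ for which any of these controls fails. One small arithmetic slip: in contribution (iii) you state that the horizontal separation $|y_1-x_0|$ is at least $n^{1/2}$ for all but $O(\log n)$ values of $x_0$, but the correct count is $O(n^{1/2})$ values; this yields a term of order $n^{-1/2}$ in the quenched revealment rather than $O(\log n/n)$, which is still absorbed by $n^{-\nu}$ once $\nu<1/2$, so the conclusion is unaffected — the paper's proof uses the threshold $2\sqrt{n}$ and the resulting $12/\sqrt n$ contribution explicitly. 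You should also note that the one-arm bad event needs a union bound over the $n^2$ vertices of $R_n$ (the paper uses $\gamma=100$ to get $n^2\cdot n^{-100}=n^{-98}$); your $\gamma=60$ gives $n^{-58}$, which is still $\leq n^{-50}$, but the statement that ``the set of $\mathscr{P}^k$ \dots has $\P_{kt}$-measure at most $n^{-60}$'' is only the per-vertex bound before the union.
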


\begin{proof}
We will prove Equation~\eqref{eq:revealmentA}, \eqref{eq:revealmentA*} following the same reasoning. We examine separately the revealment of bits. First, we consider the case when $C_{k,t}^{\leftarrow}(x)$ is large, for some $x \in [1,n]^{2}$. Define the event
\begin{equation}
A = \left\{\begin{array}{c} \text{there exists $x \in R_{n}$ such that} \\ C_{k,t}^{\leftarrow}(x) \cap B(x,\log n) \neq \emptyset\end{array} \right\},
\end{equation}
and observe that Lemma~\ref{prop:col} implies
\begin{equation}
\P_{kt}\left[A\right] \leq n^{2}e^{-\frac{1}{8} \log n \log \log n}.
\end{equation}
Second, consider the event
\begin{equation}
B = \left\{\begin{array}{c} \text{there exists $x \in R_{n}$ such that} \\ \P_{p, \frac{1}{k}}\left[\arm_{\sqrt n}(x, \eta_{0}, \mathscr{P})\middle|\mathscr{P}^{k}\right] \geq n^{-\nu'} \end{array} \right\},
\end{equation}
where $\nu'$ is obtained from Proposition~\ref{prop:one_arm} by choosing $\gamma=100$, and observe that
\begin{equation}
\P_{kt}\left[B\right] \leq n^{2}n^{-100} = n^{-98}.
\end{equation}

We now bound the revealment on the event $A^{c} \cap B^{c}$. In this case, we split the revealment in two cases. Either the distance from the site $x$ to the random selected line is smaller then $2\sqrt{n}$, which is unlikely due to the randomness in selecting the line, or $x \in C_{k,t}^{\leftarrow}(y)$, for some $y$ such that a neighbor of it is connected to the random line by an open path. Since we are in the event $A^{c}$, we may assume that $y$ is close to $x$ and hence that, in the last case, $\arm_{\sqrt n}(x,\eta_{0},\mathscr{P})$ holds. This leads to the bound
\begin{equation}
\begin{split}
\lefteqn{\delta_\mathcal{A}(\mathscr{P}^k) \mathbf{1}_{A^{c} \cap B^{c}}(\mathscr{P}^{k}) }\phantom{******}
\\
&\leq \left(\max_{x \in R_{n}} \P_{p, \frac{1}{k}}\left[\arm_{\sqrt n}(x, \eta_{0}, \mathscr{P})\middle|\mathscr{P}^{k}\right]\right)\mathbf{1}_{A^{c} \cap B^{c}}(\mathscr{P}^{k})  + \frac{4\sqrt{n}}{\frac{n}{3}} \\
& \leq n^{-\nu'} + \frac{12}{n^{\sfrac{1}{2}}} \leq n^{-\nu},
\end{split}
\end{equation}
if $\nu$ is small enough and~$n$ large enough. In particular, we obtain from Proposition~\ref{prop:one_arm}, by choosing~$k$ and~$n$ sufficiently large,
\begin{equation}
\begin{split}
\P_{kt}\left[\delta_\mathcal{A}(\mathscr{P}^k) > n^{-\nu} \right]& \leq \P_{kt}\left[A \cup B\right] \\
& \leq n^{2}e^{-\frac{1}{8} \log n \log \log n}+n^{-98} \leq n^{-50},
\end{split}
\end{equation}
concluding the proof.
\end{proof}

\section{Sharp thresholds}\label{sec:thresholds}
~
\par In this section,we combine the results from the previous sections to conclude the proof of Theorem\ref{t:sharp_thresholds}. As already mentioned in Remark~\ref{remark:lambda=1}, we consider only the case $\lambda=1$.
\begin{proof}[Proof of Theorem~\ref{t:sharp_thresholds}.]
Given $\alpha>0$, consider the interval
\begin{equation}
I_{\alpha}(n)=\left\{p \in \left[\frac{1}{10},\frac{9}{10}\right]: \P_{p,t}[H(n,n)] \in [\alpha, 1-\alpha]\right\}.
\end{equation}
Our goal is to prove that the length of this interval is bounded by $cn^{-\gamma}$, for some positive constants $c=c(\alpha)$ and $\gamma$ that does not depend on $\alpha$. This is enough to conclude the proof, once we know that $p_{c}(t) \in I_{\alpha}(n)$, for all $n \in \N$, provided $\alpha$ is small enough.

We begin by introducing the event where the process is well behaved inside the box $R_{n}$. Recall the definition of the event $E(\epsilon)$ in~\eqref{eq:large_cone} and consider
\begin{equation}
A(\epsilon)=E(\epsilon) \cup \left\{|\mathscr{P}^{k}_{x}| \geq \log n, \text{ for some } x \in [-(n-1),2n]^{2} \right\}.
\end{equation}
Notice that, as a consequence of Proposition~\ref{prop:col} and standard bounds on the tail of the Poisson distribution, we obtain
\begin{equation}
\P_{kt}[A(\epsilon)] \leq 10 n^{2}\exp\left\{-\frac{\epsilon}{8} \log n \log \left(\epsilon \log n\right)\right\}
\end{equation}
if $n$ is large enough, depending on $k$ and $t$.

Given $p \in I_{\alpha}(n)$, consider the events
\begin{equation}
B(p)=\left\{ \P_{p,\frac{1}{k}}[f_{n}(\eta_{0}, \mathscr{P})=1|\mathscr{P}^{k}] \notin \left(\frac{\alpha}{2}, 1-\frac{\alpha}{2}\right)\right\}
\end{equation}
and
\begin{equation}
C = \left\{
\begin{array}{c}
\delta_\mathcal{A}(\mathscr{P}^k) \geq n^{-\nu} \text{ for some } p \in I_{\alpha}(n)\cap(0,p_c(t)];
\\
\delta_\mathcal{A^*}(\mathscr{P}^k) \geq n^{-\nu} \text{ for some } p \in I_{\alpha}(n)\cap(p_c(t),1).
\end{array}
\right\},
\end{equation}
where $\mathcal{A}$ denotes Algorithm~\ref{alg:crossing}, $\mathcal{A}^*$ denotes the analogous algorithm that looks for vertical closed $*$-crossings, and $\nu>0$ is given by Proposition~\ref{prop:revealment}. Here we observe that the revealment of our algorithm (or its analogue) is monotone in~$p$, since it is related to connection probabilities. This can be used to bound the probability of the above event, by considering only the case $p=p_{c}(t)$.
We claim that, for each $p \in I_{\alpha}(n)$,
\begin{equation}
\P_{kt}\left[B(p) \cup C\right] \leq \frac{4}{\alpha^{2}k}+2n^{-50}.
\end{equation}
The above bound follows partly from Proposition~\ref{prop:revealment} and partly from a reasoning analogous to the proof of Lemma~\ref{lemma:quenched_cir}. If we take $k$ large enough, and $n$ large depending on $k$ and $t$, we have
\begin{equation}
\P_{kt}[A(\epsilon) \cup B(p) \cup C] \leq \frac{1}{2}.
\end{equation}

We now use the OSSS inequality in the quenched setting. We assume that $p\leq p_c(t)$, the other case following analogously. If
\[
\mathscr{P}^{k} \in \left(A(\epsilon) \cup B(p) \cup C \right)^{c},
\]
we can use Proposition~\ref{prop:piv} with $\mu=\frac{\nu}{2}$ and Russo's Formula~\eqref{eq:russo} to estimate
\begin{equation}
\nonumber
\begin{split}
\lefteqn{\var\Big( f_{n}(\eta_{0}, \mathscr{P}) \Big|\mathscr{P}^{k}\Big)}
\phantom{**}
\\
&\leq \sum_{x} \delta_\mathcal{A}(\mathscr{P}^k)\Bigg(\infl_{x}\left(f_{n}, \mathscr{P}^{k}\right)+\sum_{s \in \mathscr{P}^{k}_{x}}\infl_{(x,s)}\left(f_{n}, \mathscr{P}^{k}\right)\Bigg) \\
&\leq \sum_{x} \delta_\mathcal{A}(\mathscr{P}^k)\Bigg(\infl_{x}\left(f_{n}, \mathscr{P}^{k}\right)+\useconstant{c:piv} n^{\frac{\nu}{2}}|\mathscr{P}^{k}_{x}|\sum_{y \in\partial B(x, 3\epsilon \log n)}\infl_{y}\left(f_{n}, \mathscr{P}^{k}\right)\Bigg) \\
&\leq n^{-\frac{\nu}{2}}\sum_{x} \infl_{x}\left(f_{n}, \mathscr{P}^{k}\right) \Big(1+\useconstant{c:piv} \log n \Big|\partial B(x, 3\epsilon \log n)\Big|\Big) \\
&\leq 25 \useconstant{c:piv} n^{-\frac{\nu}{2}}\log^{2} n\sum_{x} \infl_{x}\left(f_{n}, \mathscr{P}^{k}\right) \\
&\leq 25 \useconstant{c:piv} n^{-\frac{\nu}{2}}\log^{2} n \frac{\partial}{\partial p} \P_{p, \frac{1}{k}}\left[f_{n}(\eta_{0}, \mathscr{P})=1\middle|\mathscr{P}^{k}\right] \\
&\leq n^{-\frac{\nu}{3}}\frac{\partial}{\partial p} \P_{p, \frac{1}{k}}\left[f_{n}(\eta_{0}, \mathscr{P})=1\middle|\mathscr{P}^{k}\right],
\end{split}
\end{equation}
if $n$ is large enough.

In particular, for $p \in I_{\alpha}(n)$, using the fact that~$f_{n}(\eta_{0}, \mathscr{P})$ is a~$\ber$ variable which is increasing in the intensity of~$\eta_0$ and~\eqref{eq:russo_inside_expectation},
\begin{equation}
\begin{split}
\frac{\partial}{\partial p} \P_{p, t}\left[H(n,n)\right] & = \frac{\partial}{\partial p} \E_{kt} \left[ \P_{p, \frac{1}{k}}\left[f_{n}(\eta_{0}, \mathscr{P})=1\middle|\mathscr{P}^{k}\right]\right] \\
& \geq \E_{kt}\left[ \frac{\partial}{\partial p} \P_{p, \frac{1}{k}}\left[f_{n}(\eta_{0}, \mathscr{P})=1\middle|\mathscr{P}^{k}\right] \mathbf{1}_{\left(A(\epsilon) \cup B(p) \cup C \right)^{c}}\left(\mathscr{P}^{k}\right) \right] \\
& \geq n^{\frac{\nu}{3}}\E_{kt}\left[ \var \Big( f_{n}(\eta_{0}, \mathscr{P}) \Big|\mathscr{P}^{k}\Big) \mathbf{1}_{\left(A(\epsilon) \cup B(p) \cup C \right)^{c}}\left(\mathscr{P}^{k}\right) \right] \\
& \geq n^{\frac{\nu}{3}}\frac{\alpha^{2}}{4} \P_{p,t}\left[\left(A(\epsilon) \cup B(p) \cup C \right)^{c}\right] \geq n^{\frac{\nu}{3}}\frac{\alpha^{2}}{8}.
\end{split}
\end{equation}

This implies
\begin{equation}
1 \geq \int_{I_{\alpha}(n)}\frac{\partial}{\partial p} \P_{p, t}\left[H(n,n)\right] \, \d p \geq n^{\frac{\nu}{3}}\frac{\alpha^{2}}{8}|I_{\alpha}(n)|,
\end{equation}
which gives the bound
\begin{equation}
|I_{\alpha}(n)| \leq \frac{8}{\alpha^{2}}n^{-\frac{\nu}{3}},
\end{equation}
and concludes the proof.
\end{proof}

\section{One-arm estimates}\label{sec:one_arm}
~
\par The goal of this section is to conclude the proof of the quenched one-arm estimates stated as Proposition~\ref{prop:one_arm}.
\begin{proof}
We will work on the event where all cones of light are well behaved. For each $n$, define
\begin{equation}
E_n:=\left\{C_{k,t}^{\leftarrow}(x) \cap \partial B\big(x, n^{\sfrac{1}{4}}\big) = \emptyset\text{ for every }x \in B\big(0, n^{\sfrac{1}{2}}\big) \right\}.
\end{equation}
Proposition~\ref{prop:col} implies, for sufficiently large~$n$,
\begin{equation}\label{eq:one_arm_1}
\P_{kt}[E_{n}^{c}] \leq 16n e^{-\frac{1}{32}n^{\sfrac{1}{4}} \log n}.
\end{equation}

Consider the collection of indices
\begin{equation}
J=\left\{j \in 2\N: n^{\frac{1}{4}} \leq 3^{j}n^{\frac{1}{4}} \leq n^{\frac{1}{2}}\right\},
\end{equation}
and, for $j \in J$, denote by $A_{j}$ the set of vertices
\begin{equation}
A_{j}=B\left(0, 2 \cdot 3^{j+1} n^{\sfrac{1}{4}}\right) \setminus B\left(0, 2\cdot 3^{j-1} n^{\sfrac{1}{4}}\right)
\end{equation}
and recall the definition of $\cir^{*}(m)$ immediately after~\eqref{eq:cir}. Notice that, on $E_{n}$, $\cir^{*}\left(3^{j}n^{\sfrac{1}{4}}\right)$ depends on $(\eta_{0}(x), \mathscr{P}_{x})$ only for $x \in A_{j}$. In particular, on $E_{n}$, we can use independence to estimate
\begin{equation}\label{eq:one_arm_2}
\begin{split}
\lefteqn{\P_{p, \frac{1}{k}}\left[\arm_{\sqrt n}(\eta_{0}, \mathscr{P})|\mathscr{P}^{k} \right]\mathbf{1}_{E_{n}}(\mathscr{P}^{k}) }\phantom{******}
\\
&\leq \P_{p, \frac{1}{k}}\left[\bigcap_{j \in J}\cir^{*}\left(3^{j}n^{\sfrac{1}{4}}\right)^{c}\middle|\mathscr{P}^{k} \right]\mathbf{1}_{E_{n}}(\mathscr{P}^{k}) \\
& = \prod_{j \in J}\P_{p, \frac{1}{k}}\left[\cir^{*}\left(3^{j}n^{\sfrac{1}{4}}\right)^{c}\middle|\mathscr{P}^{k} \right]\mathbf{1}_{E_{n}}(\mathscr{P}^{k}).
\end{split}
\end{equation}
Consider now the event
\begin{equation}
D_{j}=\left\{\P_{p, \frac{1}{k}}\left[\cir^{*}\left(3^{j}n^{\sfrac{1}{4}}\right)\middle|\mathscr{P}^{k} \right] \geq \frac{\useconstant{c:cir}}{2} \right\},
\end{equation}
and denote by $D$ the event where $D_{j}$ holds for at least half of the indices $j \in J$.
From~\eqref{eq:one_arm_2}, we obtain
\begin{equation}
\begin{split}
\P_{p, \frac{1}{k}}\left[\arm_{\sqrt n}(\eta_{0}, \mathscr{P})\middle|\mathscr{P}^{k} \right] & \mathbf{1}_{E_{n}\cap D}(\mathscr{P}^{k}) \\
& \leq \prod_{j \in J}\P_{p, \frac{1}{k}}\left[\cir^{*}\left(3^{j}n^{\sfrac{1}{4}}\right)^{c}\middle|\mathscr{P}^{k} \right]\mathbf{1}_{E_{n} \cap D}(\mathscr{P}^{k}) \\
& \leq \left(1-\frac{\useconstant{c:cir}}{2}\right)^{\frac{|J|}{2}} \leq n^{-\nu},
\end{split}
\end{equation}
for some $\nu$ small enough, since $|J|$ is of order $\log n$. This implies that
\begin{equation}\label{eq:one_arm_2.5}
\P_{kt}\Big[\P_{p, \frac{1}{k}}\left[\arm_{\sqrt n}(\eta_{0}, \mathscr{P})|\mathscr{P}^{k}\right] \geq n^{-\nu} \Big] \leq \P_{kt}\left[E_{n}^{c} \cup D^{c}\right],
\end{equation}
so it remains to bound the right hand side probability above.

We begin by estimating
\begin{equation}\label{eq:one_arm_3}
\P_{kt}\left[E_{n}^{c} \cup D^{c}\right] \leq \P_{kt}\left[E_{n}^{c}\right]+ 2^{|J|-1}\sup_{I} \P_{kt}\left[E_{n} \cap \bigcap_{j \in I} D_{j}^{c}\right],
\end{equation}
where the supremum is taken over all subsets of $J$ with at least $\frac{|J|}{2}$ indices. From Lemma~\ref{lemma:quenched_cir}, we obtain
\begin{equation}
\P_{kt}[D_{j}^{c}] \leq \frac{4}{\useconstant{c:cir}^{2} k},
\end{equation}
provided $k$ is taken large enough. For each $j \in J$, let $E_{n}(j)$ be an event analogous to $E_{n}$, but only observing the cone of light of vertices inside $B\left(0,3^{j+1}n^{\sfrac{1}{4}}\right) \setminus B\left(0,3^{j}n^{\sfrac{1}{4}}\right)$. The events $\left(E_{n}(j)\cap D _{j}^{c}\right)_{j \in J}$ are then independent, since they depend on disjoint parts of the graphical construction. From this, we obtain
\begin{equation}\label{eq:one_arm_4}
\begin{split}
\P_{kt}\left[E_{n} \cap \bigcap_{j \in I} D_{j}^{c}\right] & \leq \P_{kt}\left[\bigcap_{j \in I}E_{n}(j) \cap D_{j}^{c}\right] \\
& \leq \prod_{j \in I} \frac{4}{\useconstant{c:cir}^{2} k} \leq \left(\frac{4}{\useconstant{c:cir}^{2} k}\right)^{\frac{|J|}{2}},
\end{split}
\end{equation}
whenever $I \geq \frac{|J|}{2}$ and $k$ is large enough.

Combining Equations~\eqref{eq:one_arm_1},~\eqref{eq:one_arm_2.5},~\eqref{eq:one_arm_3}, and~\eqref{eq:one_arm_4} yields
\begin{equation}
\P_{kt}\Big[\P_{p, \frac{1}{k}}\left[\arm_{\sqrt n}(\eta_{0}, \mathscr{P})\middle|\mathscr{P}^{k}\right] \geq n^{-\nu} \Big] \leq n^{-\gamma},
\end{equation}
for all $n$ large enough, by further increasing the value of $k$ if necessary. This concludes the proof of the result.
\end{proof}

\section{Stretched-exponential decay of the one-arm event probability}\label{sec:decay}
~
\newconstant{c:1armexpgen1}
\newconstant{c:1armexpgen2}
In this section we will prove Theorem~\ref{t:exp_decay} using the results so far obtained. We will in fact prove a more general result, based on the proof of Theorem~$3.1$ of~\cite{pt}, which, together with a decoupling inequality and Theorem~\ref{t:sharp_thresholds}, will imply the desired rate of decay.

We first develop some notation needed before we state the result. Given $L\in\R_+$ and~$x\in\Z^d$, we define the subsets
\begin{equation}
\begin{split}
\label{eq:annulusdef}
C_x(L):=[0,L)^d + x,\quad\quad  D_x(L):=[-L,2L)^d\cap\Z^d + x.
\end{split}
\end{equation}
In accordance with~\eqref{eq:open_connection}, we denote by $\{A \longleftrightarrow B\}$ for the event where there exists a nearest-neighbor open path starting at~$A$ and ending at~$B$. For~$x\in\Z^d$, $L\in\R_+$, we define the annulus-crossing event
	\begin{equation*}
	A_x(L) := \{C_x(L){\longleftrightarrow} \Z^d\setminus D_x(L)\}.
	\end{equation*}
\begin{prop}
	\label{p:1armdecaygen}
	Let~$\tilde \P$ denote a probability distribution over~$\{0,1\}^{\Z^d}$, invariant under translations of~$\Z^d$. Assume that
	\begin{equation}
	\label{eq:1armexpgen2}
	\liminf_{L\to \infty} \tilde \P \left[A_0(L) \right]<\frac{1}{d^{2}\cdot 7^d},
	\end{equation}
	and that there exists a positive constant~$\useconstant{c:1armexpgen1}>0$ such that, for every~$L,R\in\R_+$ and every~$x,y\in \Z^d$ with~$\|x-y\|_\infty \geq 3L+R$, we have
	\begin{equation}\label{eq:1armexpgen3}
	\left| \tilde \P\left[ A_x(L) \cap A_y(L)      \right]  -  \tilde\P\left[ A_x(L)       \right]  \tilde\P\left[ A_y(L)      \right]     \right| \leq L^{2d} \exp\left\{ -f(R)      \right\},
	\end{equation}
	where~$f:\R_+\to\R_+$ is a non-decreasing function such that
	\begin{equation}
	\begin{split}
	\label{eq:1armexpgenf}
	\liminf_{R\to \infty} \frac{f(R)}{R\log R}\geq \useconstant{c:1armexpgen1}.
	\end{split}
	\end{equation}
	Then, for every~$\varepsilon>0$ there exists a positive constant $\useconstant{c:1armexpgen2}=\useconstant{c:1armexpgen2}(\varepsilon)>0$ such that, for~$n\in\N$,
	\begin{equation}\label{eq:1armexpgen4}
	\tilde \P\left[ \{0\} \longleftrightarrow \partial B(0,n) \right]  \leq \useconstant{c:1armexpgen2}^{-1}\exp\left\{
	-\useconstant{c:1armexpgen2}\frac{n}{(\log n)^\varepsilon}
	\right\}.
	\end{equation}
\end{prop}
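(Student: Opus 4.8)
The plan is to run a multiscale renormalisation scheme adapted from~\cite{pt}. Fix a sequence of scales $L_0,L_1,\dots$ growing geometrically (say $L_{i+1}\approx 7L_i$, or $L_{i+1}=\ell L_i$ for a large fixed integer $\ell$), and at each scale declare a box $C_x(L_i)$ to be \emph{good} if the annulus-crossing event $A_x(L_i)$ does \emph{not} occur. The hypothesis~\eqref{eq:1armexpgen2} says that at some fixed scale $L_{i_0}$ the probability of the bad event is strictly below the threshold $1/(d^2\cdot 7^d)$. The core of the argument is a recursive inequality: if a box at scale $L_{i+1}$ is bad, then there must be an open crossing of the corresponding annulus $D_x(L_{i+1})\setminus C_x(L_{i+1})$, and such a crossing forces the existence of two \emph{disjoint}, \emph{well-separated} bad sub-boxes at scale $L_i$ among the $O(7^d)$ sub-boxes tiling the annulus region (this is where the combinatorial constant $d^2\cdot 7^d$ enters — any long crossing must traverse two sub-annuli that are far apart). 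I would first make this geometric/combinatorial statement precise, then use the decoupling estimate~\eqref{eq:1armexpgen3} with $R=R_i$ a slowly growing separation parameter to bound the joint probability of two bad sub-boxes by the product of their probabilities plus the error term $L_i^{2d}\exp\{-f(R_i)\}$.

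Writing $q_i := \tilde\P[A_0(L_i)]$ (or, more carefully, a supremum over translates, which by translation invariance equals $q_i$), the recursion takes the schematic form
\begin{equation*}
q_{i+1} \leq \binom{C\cdot 7^d}{2}\left( q_i^2 + L_i^{2d}e^{-f(R_i)}\right)
\end{equation*}
for an explicit combinatorial constant $C$. If the error term is negligible compared to $q_i^2$ — which is where condition~\eqref{eq:1armexpgenf} is used, since $f(R)\gtrsim \useconstant{c:1armexpgen1}R\log R$ lets us choose $R_i$ growing like a small multiple of $L_i/\log L_i$ so that $e^{-f(R_i)}$ beats every polynomial in $L_i$ — then the recursion becomes essentially $q_{i+1}\lesssim q_i^2$, which drives $q_i$ to zero doubly-exponentially in $i$ once it starts below $1/(d^2\cdot 7^d)$. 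Since $L_i$ grows only geometrically in $i$, i.e.\ $i\approx \log L_i$, a doubly-exponential decay in $i$ translates into $q_i \leq \exp\{-c L_i\}$ or thereabouts; tracking the separation cost $R_i$ carefully degrades this to the stretched-exponential rate $\exp\{-c\, L_i/\log L_i\}$ claimed in~\eqref{eq:1armexpgen4}. Finally, to pass from annulus-crossing probabilities $q_i$ to the one-arm probability $\tilde\P[\{0\}\longleftrightarrow\partial B(0,n)]$, I would observe that a one-arm crossing from $0$ to distance $n$ must cross every annulus $D_x(L_i)\setminus C_x(L_i)$ it passes through at the largest scale $L_i\lesssim n$, so $\tilde\P[\{0\}\longleftrightarrow\partial B(0,n)] \leq q_{i(n)}$ with $L_{i(n)}$ comparable to $n$, and the bound follows.

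The main obstacle I anticipate is making the decoupling step interact cleanly with the renormalisation recursion: the error term in~\eqref{eq:1armexpgen3} depends on $L$ (the scale) and only decays in $R$ (the separation), so one must choose the separation $R_i$ at scale $i$ large enough to kill the polynomial factor $L_i^{2d}$ \emph{and} to keep the geometry consistent — the separated sub-boxes must still fit inside the annulus at the next scale, which forces a compatibility constraint between $\ell$ (the scale ratio), $R_i$, and $L_i$. This is exactly the reason the final rate is $n/\log n$ rather than linear in $n$: the separation $R_i$ cannot be taken as small as a constant, it must grow, and balancing its growth against the doubly-exponential decay of $q_i$ is the delicate bookkeeping. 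A secondary technical point is handling the boundary/translation issues so that the "worst box" probability at each scale is genuinely controlled by $q_i$ via translation invariance, and ensuring the disjointness of the two sub-boxes so that, modulo the decoupling error, we really do get the square $q_i^2$ and not just $q_i$.
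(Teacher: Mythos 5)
Your plan follows the paper's strategy: a multiscale renormalisation scheme adapted from~\cite{pt}, based on the cascading inclusion
\[
A_0(L_{k+1})\subset\bigcup_{i,j} A_{x_i^k}(L_k)\cap A_{y_j^k}(L_k)
\]
(one sub-box on the inner block, one on the outer boundary), combined with the decoupling hypothesis~\eqref{eq:1armexpgen3} to obtain a recursion
\[
p_{k+1}\leq d^2\cdot 7^d\Bigl(p_k^2 + L_k^{2d}e^{-f(R_k)}\Bigr),
\]
using~\eqref{eq:1armexpgenf} to make the error term negligible, and then passing from annulus-crossing to one-arm probabilities. The framework is right, but there is a genuine gap that would break an implementation as you describe it.

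The scale ratio cannot be~$7$, nor any fixed~$\ell>2$. With~$L_{i+1}=\ell L_i$ one has~$L_i\asymp\ell^i$, and the doubly-exponential decay~$q_i\lesssim\exp\{-c2^i\}$ coming from~$q_{i+1}\lesssim q_i^2$ translates only into~$q_i\lesssim\exp\{-cL_i^{\log 2/\log\ell}\}$, which for~$\ell>2$ is a strictly weaker (polynomial) stretched exponential than the claimed~$\exp\{-cL_i/\log L_i\}$. To obtain a near-linear exponent the ratio must be just above~$2$. The paper takes~$L_{k+1}=2\bigl(1+(k+5)^{-3/2}\bigr)L_k$: the excess above~$2$ is exactly what creates the geometric room for the separation~$R_k\approx 2L_k/(k+5)^{3/2}$ used in the decoupling, and since~$\sum_k(k+5)^{-3/2}<\infty$ the product~$\prod_k\bigl(1+(k+5)^{-3/2}\bigr)$ converges, ensuring~$L_k\asymp 2^k$. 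The logarithmic loss in the final rate is then precisely the~$1/k$ appearing in the induction target~$p_k\leq\exp\{-h_1-2^kh_2/k\}$, with~$k\approx\log L_k$. Your plan attributes the~$\log$ loss entirely to the choice of~$R_i$, but omits how the scale sequence itself must be calibrated to carry that separation while staying~$\asymp 2^k$. A secondary point: the constant~$\binom{C\cdot 7^d}{2}\sim 7^{2d}$ in your schematic recursion overcounts; the pairs that actually arise match one inner-block sub-box against one outer-boundary sub-box, giving the~$d^2\cdot 7^d$ of the hypothesis, and a~$7^{2d}$ factor would not be beaten by a threshold of~$1/(d^{2}\cdot 7^{d})$.
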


\begin{proof}
	
	The proof is based on the proof of Theorem~$3.1$ of~\cite{pt}, specifically, the proof of Equation~$(3.5)$. Since in our case no sprinkling argument is needed in order to obtain a decoupling inequality, the argument here will be simpler.
	
	The proof consists in a multiscale renormalisation argument. We start by inductively defining the sequence of scales~$(L_k)_{k\in\N}$. Given~$L_1\in\R_+$, which will chosen to be large, we let, for~$k\in\N$, 
	\begin{equation}
	\begin{split}
	\label{eq:Lkdef}
	L_{k+1}=2\left(1+\frac{1}{(k+5)^{1+\varepsilon}}\right)L_{k}.
	\end{split}
	\end{equation}
	One can easily check that
	\begin{equation}
	\begin{split}
	\label{eq:Lkbound3}
	L_1 2^{k-1} \leq L_k \leq C_\varepsilon L_1 2^{k-1} . 
	\end{split}
	\end{equation}
	for some constant~$C_\varepsilon>0$ depending on the exponent~$1+\varepsilon$.
	
	The proof of the theorem is based on an induction argument that bounds the probability
	\begin{equation}
	\begin{split}
	\label{eq:pkdef}
	p_k:=\tilde \P\left[A_0(L_k)\right].
	\end{split}
	\end{equation}
	Recalling the sets defined in~\eqref{eq:annulusdef}, note that, for~$k\geq 1$, there exist two collection of points~$\{x_i^k\}_{i=1}^{3d}$ and~$\{y_j^k\}_{j=1}^{2d\cdot 7^{d-1}}$ such that
	\begin{equation}
	\begin{split}
	\label{eq:xiyiproperty}
	&C_0(L_{k+1})=\cup_{i=1}^{3d}C_{x_i}(L_{k}),
	\\
	&\left(\cup_{j=1}^{2d\cdot 7^{d-1}}C_{y_j}(L_{k})\right)\cap D_0(L_{k+1})=\emptyset,
	\\
	&\partial(\Z^d\setminus D_0(L_{k+1}))\subset\cup_{j=1}^{2d\cdot 7^{d-1}}C_{y_j}(L_{k}).
	\end{split}
	\end{equation}
	Properties~\eqref{eq:xiyiproperty} then imply (see Figure~\ref{f:multiscale})
	\begin{equation}
	\begin{split}
	\label{eq:akinduc}
	A_0(L_{k+1})\subset \bigcup_{\substack{i \leq 3 d \\ j \leq 2d\cdot 7^{d-1}}} A_{x_i^k}(L_{k})\cap A_{y_j^k}(L_{k}).
	\end{split}
	\end{equation}
	
	\begin{figure}[ht]
		\centering
		\includegraphics[scale =.7]{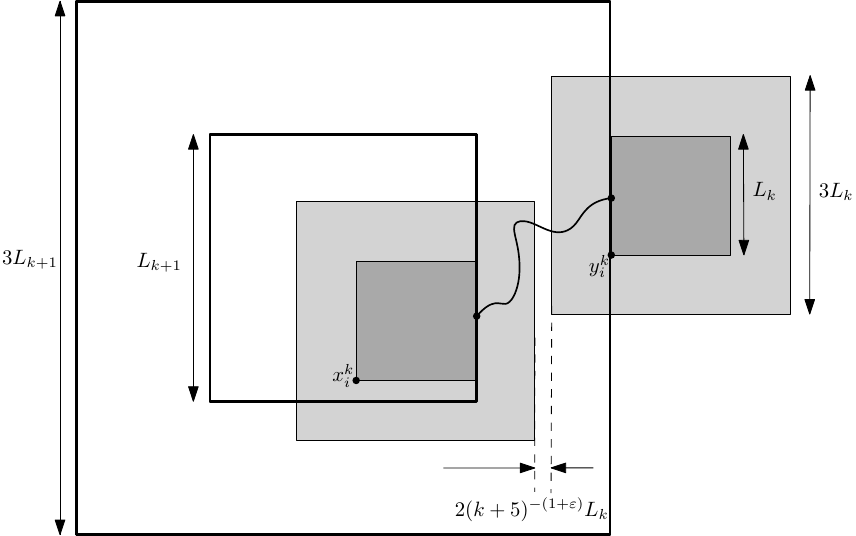}
		\vspace{0.5cm}
		\caption{The ``cascading'' nature of the events~$A_x(L_k)$. }
		\label{f:multiscale}
	\end{figure}
	
It is also elementary to check that the distance between $D_{x_i^k}(L_k)$ and $D_{y_j^k}(L_k)$ is greater than~$2(k+5)^{-(1+\varepsilon)}L_k$ uniformly in~$i$ and~$j$. Property~\eqref{eq:1armexpgen3} then implies, together with the equation above and the translation invariance of~$\tilde \P$, for~$k\geq 1$,
	\begin{equation}
	\begin{split}
	\label{eq:pkinduc}
	p_{k+1}\leq d^{2} \cdot 7^d \left( 
	p_k^2 + L_k^{2d}\exp\left\{  -   f\left(     \frac{2L_k}{(k+5)^{1+\varepsilon}}      \right)       \right\}
	\right).
	\end{split}
	\end{equation}
	Proceeding in the same way as in~\cite{pt}, one proves by induction that Equations~\eqref{eq:1armexpgen2} and~\eqref{eq:1armexpgen3} imply, for suitably chosen real numbers~$h_1,h_2>0$,
	\begin{equation}
	\begin{split}
	\label{eq:pkinduc3}
	p_{k}\leq 
	\exp\left\{ -h_1 -h_2\frac{2^k}{k^{\varepsilon}} \right\}.
	\end{split}
	\end{equation}
	Note then that, for~$n\in[2L_k,2L_{k+1}]$, we have
	\begin{equation}
	\label{eq:Lkton}
	\left\{\{0\} \longleftrightarrow \partial B(0,n)\right\} \subseteq A_0(L_k),
	\end{equation}
	and therefore
	\begin{equation}
	\label{eq:Lkton2}
	\tilde \P[\{0\} \longleftrightarrow \partial B(0,n) ] \leq  \tilde \P[A_0(L_k)] \leq \exp\left\{ -h_1 -h_2 \frac{2^k}{k^\varepsilon} \right\}.
	\end{equation}
	Equation~\eqref{eq:Lkbound3} then implies the result, for a suitably chosen constant~$\useconstant{c:1armexpgen2}$.
\end{proof}

We can then finish the proof of the main result of this section.

\begin{proof}[Proof of Theorem~\ref{t:exp_decay}]
For~$p<p_c(t)$, Theorem~\ref{t:sharp_thresholds} implies
\begin{equation}
\limsup_{n}\P_{p,t}\left[  A_0(n)\right] \leq 4 \limsup_{n} \P_{p,t}[H(n,3n)] = 0.
\end{equation}
Proposition~\ref{prop:decoup} and basic properties of majority dynamics imply that~$\P_{p,t}$ satisfies the hypotheses of Proposition~\ref{p:1armdecaygen}, which then implies equation~\eqref{eq:exp_decay1}. Equation \eqref{eq:exp_decay2} follows from a completely analogous reasoning. 

Notice that in the event where the finite open cluster of the origin has diameter~$n$, one can find a closed vertex inside~$B(0,n)$ through which a closed $*$-path of length larger than~$n$ passes. Equation \eqref{eq:exp_decay2} and a union bound argument then yield Equation~\eqref{eq:exp_decay3}, and the analogous result for closed $*$-clusters containing the origin follows from~\eqref{eq:exp_decay1} and the same reasoning.
\end{proof}

\section{Further models}\label{sec:further_models}
~
\par As we already mentioned, our technique can be applied to other particle systems as long as some basic properties can be verified. In particular, we require equivalent formulations of Lemmas~\ref{lemma:variance_decay} and of Propositions~\ref{prop:one_arm},~\ref{prop:col} and~\ref{prop:piv}. Here, we extend our results for the voter model in the two-dimensional lattice $\Z^{2}$.

\par The voter model is very similar to majority dynamics, in the sense that it differs just in the way each vertex selects its new opinion once its clock ticks. In this case, the new opinion is selected randomly among the neighbors' opinions.

\par Once again, for each fixed time $t$, there exists a non-trivial critical parameter $p^{VM}_{c}(t) \in (0,1)$ for the existence of percolation at time $t$. Since, by applying Proposition~\ref{prop:col}, we can derive decoupling estimates that are uniform in the value of $p \in [0,1]$, these can be used to deduce taht $p_{c}^{VM}(t)$ is non-trivial , for each positive $t$, by standard renormalisation arguments.

\par The usual graphical construction of the voter model (see Remark~\ref{remark:voter_model}) can be modified exactly as we did in Section~\ref{sec:construction}, and Lemma~\ref{lemma:variance_decay} and Proposition~\ref{prop:one_arm} can be obtained from general results, as in the case of majority dynamics. Furthermore, we can apply the same proof to obtain Proposition~\ref{prop:col}. The most delicate part is in establishing a relation between time-pivotality and  space-pivotality.

\par Let us now describe how one approaches Proposition~\ref{prop:piv} here. In this case, we use the fact that the opinion of each vertex at any time $s \geq 0$ is a copy of one of the initial opinions that are contained in the past cone of light. Not only that, but changing this opinion at time zero implies that the opinion changes at time $s$. This last observation allows us to conclude that time-pivotality implies space-pivotality for some vertex in the cone of light. From this, we derive the bound
\begin{equation}
\infl_{(x,s)}(f_n, \mathscr{P}^{k}) \leq c \sum_{y  \, \in \, C_{k,t}^{\leftarrow}(x)} \infl_{y}(f_n, \mathscr{P}^{k}),
\end{equation}
for some positive constant $c>0$. This yields a version of Proposition~\ref{prop:piv} that can be used to conclude Theorem~\ref{t:sharp_thresholds} for the voter model.

\bibliographystyle{plain}
\bibliography{mybib}

\begin{thebibliography}{10}

\bibitem{ahlbergb}
Daniel Ahlberg and Rangel Baldasso.
\newblock Noise sensitivity and {V}oronoi percolation.
\newblock {\em Electronic Journal of Probability}, 23, 2018.

\bibitem{abgm}
Daniel Ahlberg, Erik Broman, Simon Griffiths, and Robert Morris.
\newblock Noise sensitivity in continuum percolation.
\newblock {\em Israel Journal of Mathematics}, 201(2):847--899, 2014.

\bibitem{att}
Daniel Ahlberg, Vincent Tassion, and Augusto Teixeira.
\newblock Sharpness of the phase transition for continuum percolation in
  $\mathbb{R}^{2}$.
\newblock {\em Probability Theory and Related Fields}, 172(1-2):525--581, 2018.

\bibitem{ab}
Gidi Amir and Rangel Baldasso.
\newblock Percolation in majority dynamics.
\newblock {\em arXiv preprint arXiv:1902.03349}, 2019.

\bibitem{cns}
Federico Camia, Charles~M. Newman, and Vladas Sidoravicius.
\newblock Approach to fixation for zero-temperature stochastic {Ising} models
  on the hexagonal lattice.
\newblock In {\em In and Out of Equilibrium}, pages 163--183. Springer, 2002.

\bibitem{dh}
David Dereudre and Pierre Houdebert.
\newblock Sharp phase transition for the continuum {W}idom-{R}owlinson model.
\newblock {\em arXiv preprint arXiv:1807.04988}, 2018.

\bibitem{dcrt0}
Hugo Duminil-Copin, Aran Raoufi, and Vincent Tassion.
\newblock Subcritical phase of $ d $-dimensional poisson-boolean percolation
  and its vacant set.
\newblock {\em arXiv preprint arXiv:1805.00695}, 2018.

\bibitem{dcrt1}
Hugo Duminil-Copin, Aran Raoufi, and Vincent Tassion.
\newblock Exponential decay of connection probabilities for subcritical
  {V}oronoi percolation in $\mathbb{R}^{d}$.
\newblock {\em Probability Theory and Related Fields}, 173(1-2):479--490, 2019.

\bibitem{dcrt2}
Hugo Duminil-Copin, Aran Raoufi, and Vincent Tassion.
\newblock Sharp phase transition for the random-cluster and {Potts} models via
  decision trees.
\newblock {\em Annals of Mathematics}, 189(1):75--99, 2019.

\bibitem{gkr}
Alberto Gandolfi, Michael Keane, and Lucio Russo.
\newblock On the uniqueness of the infinite occupied cluster in dependent
  two-dimensional site percolation.
\newblock {\em The Annals of probability}, 16(3):1147--1157, 1988.

\bibitem{harris}
Theodore~E. Harris.
\newblock A correlation inequality for {Markov} processes in partially ordered
  state spaces.
\newblock {\em The Annals of Probability}, pages 451--454, 1977.

\bibitem{mv}
Stephen Muirhead and Hugo Vanneuville.
\newblock The sharp phase transition for level set percolation of smooth planar
  {G}aussian fields.
\newblock {\em arXiv preprint arXiv:1806.11545}, 2018.

\bibitem{osss}
Ryan O'Donnell, Michael Saks, Oded Schramm, and Rocco~A. Servedio.
\newblock Every decision tree has an influential variable.
\newblock In {\em 46th Annual IEEE Symposium on Foundations of Computer Science
  (FOCS'05)}, pages 31--39. IEEE, 2005.

\bibitem{pt}
Serguei Popov and Augusto Teixeira.
\newblock Soft local times and decoupling of random interlacements.
\newblock {\em Journal of the European Mathematical Society},
  17(10):2545--2593, 2015.

\bibitem{russo}
Lucio Russo.
\newblock An approximate zero-one law.
\newblock {\em Zeitschrift f{\"u}r Wahrscheinlichkeitstheorie und verwandte
  Gebiete}, 61(1):129--139, 1982.

\bibitem{talagrand}
Michel Talagrand.
\newblock On {R}usso's approximate zero-one law.
\newblock {\em The Annals of Probability}, 22(3):1576--1587, 1994.

\bibitem{tt}
Omer Tamuz and Ran~J. Tessler.
\newblock Majority dynamics and the retention of information.
\newblock {\em Israel Journal of Mathematics}, 206(1):483--507, 2015.

\bibitem{tassion}
Vincent Tassion.
\newblock Crossing probabilities for {Voronoi} percolation.
\newblock {\em The Annals of Probability}, 44(5):3385--3398, 2016.

\bibitem{vdb}
Jacob van~den Berg.
\newblock Sharpness of the percolation transition in the two-dimensional
  contact process.
\newblock {\em The Annals of Applied Probability}, pages 374--395, 2011.

\end{thebibliography}

\end{document}